\begin{document}

\title[\hfilneg \hfil Time-discretization method]{Time-discretization method for a multi-term time fractional differential equation with delay}
	
\author[A. Khatoon, A. Raheem \& A. Afreen \hfil \hfilneg]
{A. Khatoon$^{*}$, A. Raheem \& A. Afreen}

\address{A. Khatoon
	\newline Department  of Mathematics, Aligarh Muslim University,
	\newline Aligarh-202002, India.}
\email{areefakhatoon@gmail.com}

\address{A. Raheem 
	\newline Department  of Mathematics, Aligarh Muslim University,
	\newline Aligarh-202002, India.}
\email{araheem.iitk3239@gmail.com}
	
\address{A. Afreen 
\newline Department of Mathematics, Aligarh Muslim University,
\newline Aligarh-202002, India.}
\email{afreen.asma52@gmail.com}
	
\renewcommand{\thefootnote}{}
\footnote{$^*$ Corresponding author:	
\url{ A. Khatoon (areefakhatoon@gmail.com)}}
\subjclass[2010]{34G20, 26A33, 37M15, 47H06, 34K30}
\keywords{Caputo fractional derivative, discretization method, $m$-accretive operator, strong solution, delay, multi-term}
	
\begin{abstract}
This paper discusses a multi-term time-fractional delay differential equation in a real Hilbert space. An iterative scheme for a multi-term time-fractional differential equation is established using Rothe's method. The method of semi-discretization is extended to this kind of time fractional problem with delay in the case that the time delay parameter $\nu >0$ satisfies $\nu\leq T$, where $T$ denotes the final time. We apply the accretivity of the operator $A$ in an iterative scheme to establish the existence and regularity of strong solutions to the considered problem. Finally, an example is provided to demonstrate the abstract result.
{\tiny } 
\end{abstract}	
\maketitle \numberwithin{equation}{section}
\newtheorem{theorem}{Theorem}[section]
\newtheorem{lemma}[theorem]{Lemma}
\newtheorem{problem}[theorem]{Problem}
\newtheorem{proposition}[theorem]{Proposition}
\newtheorem{corollary}[theorem]{Corollary}
\newtheorem{remark}[theorem]{Remark}
\newtheorem{definition}[theorem]{Definition}
\newtheorem{example}[theorem]{Example}
\allowdisplaybreaks
\section{\textbf{Introduction}}
Leibniz initially raised the idea of fractional derivatives in his letter to L'Hospital \cite{GW1695} dated September 30, 1695, when he questioned the meaning of ``half-order derivative." Many well-known mathematicians were fascinated by Leibniz's question. Since the 19th century, fractional calculus theory has developed rapidly and was the beginning of several disciplines. Many applications exist in various fields, including signal and image processing, porous media, optimal control,  fractional filters, fractals, soft matter mechanics,  etc. The non-integer order model describes a more accurate model than the integer order model, which is the main reason that the applications of fractional calculus are becoming more popular. We refer to the papers \cite{r5, BB2020, BB2022, y1, y2} and the references cited therein for the basics of fractional calculus and its applications.	
	
The theory of the approximate solution of differential equations has received much attention from numerous researchers. There are various methods for finding approximate solutions to differential equations. The Cauchy-Maruyama approximation \cite{G1955}, Caratheodory approximation \cite{D1989, X1994}, Euler-Maruyama approximation \cite{X2006, XCG2007}, Picard approximation \cite{D2002, RAR2017}, Faedo-Galerkin method \cite{AD2016, AM2021}, and Rothe's method \cite{AAAD2022, AD2014, K2020} are a few well-known techniques that are utilized to obtain the approximate solutions of differential equations.

The method of semi-discretization, named Rothe's approach, was developed in 1930 by Rothe \cite{E1930}  to handle a second-order scalar parabolic initial value problem. Rothe's method is used to demonstrate the existence and uniqueness of solutions for differential equations. Several researchers used this method; see, for instance, \cite{AR2010, A2011,MA2019, K2021}. Diffusion problems are also studied using this technique \cite{AA2018, NA2003, AD2011}. Recently, Rothe's method also discussed in variational and hemivariational inequalities \cite{KM2016, SS2019}.

In 2011, Dubey \cite{A2011} studied the existence of a solution to the delay differential equation using Rothe's method. In the same year, Raheem and  Bahuguna \cite{AD2011} investigated the existence and uniqueness of a strong solution for a fractional integral diffusion equation. In 2019, Migórski and Zeng \cite{SZS2019} demonstrated the existence of a solution for multi-term time fractional integral diffusion equations using the semi-discretization method. 

As mentioned above, fruitful results have been made in the case of single (multi-term) Caputo fractional diffusion equations with and without delay. There are many cases where the fractional diffusion equations with delay include not only the Caputo derivative but also a multi-term fractional derivative. Researchers in the work \cite{CS2009} also noted that ecological issues can be solved using delay diffusion equations. A strong motivation for investigating such equations comes from physics. Fractional diffusion equations describe anomalous diffusion on fractals (physical objects of fractional dimension, like some amorphous semiconductors or strongly porous materials; see \cite{VN2001, RJ2000} and references therein). Thus, it is meaningful to consider a class of multi-term Caputo fractional delay differential equations. 

Motivated by the above mentioned works \cite{A2011, SZS2019,  AD2011}, and following the approach used in these papers, we study this paper that deals with the existence of a strong solution of the following delay differential equation in a real Hilbert space ${H}$:	
\begin{eqnarray} \label{1.1}
	\left \{ \begin{array}{lll} \dfrac{d\vartheta(t)}{dt}+\sum\limits_{q=1}^{k}a_q{{}^C{D}_t^{\alpha_q}}\vartheta(t)+{A}\vartheta(t)=f(t,\vartheta(t-\nu)),&&t \in [0,T],
		\\\vartheta(t)=\chi(t),\quad t\in [-\nu,0],
	\end{array}\right.
\end{eqnarray}
where $a_q\geq 0$ are constants, ${}^C{D}_t^{\alpha_q}$ represents the Caputo derivative of order $0<\alpha_q<1$ for $q=1,2,\dots,k,$~  $\nu>0,~ T <\infty,~-{A}$ generates a $C_0$ semigroup of contractions in ${H}$, $f:[0,T]\times C\big([-\nu,0];{H}\big)\to {H}$, $\chi\in C\big([-\nu,0];{H}\big)$. Here, $C([-\nu,t];{H}) $ for $t\in [0,T]$ is the set of all continuous functions from $[-\nu,t]$ into ${H}$ and the space $\mathcal{C}_t:= C([-\nu,t];{H}),~t\in [0,T]$ denotes the Banach space with norm
\begin{eqnarray*}
	\|\vartheta\|_t:=\sup\limits_{-\nu\leq s\leq t}\|\vartheta(s)\|, ~~ \vartheta \in \mathcal{C}_t ,
\end{eqnarray*}
where $\|\cdot\|$ represents the norm in ${H}.$

In \cite{KMA2022}, Bockstal et al. studied a damped variable order fractional subdiffusion equation with time delay in a finite-dimensional case, and they proved the existence of a weak solution by using a discretization approach. In this problem \cite{KMA2022}, the method of semi-discretization is extended to this kind of time fractional parabolic problem with delay in the case that the time delay parameter $\nu>0$ satisfies $\nu\leq T$, where $T$ denotes the final time. Our result is the generalization of this work but in the case of constant order with a single Caputo derivative term in abstract space.

In \cite{JCY2022}, Du et al. considered the following  multiterm Caputo–Katugampola fractional delay integral diffusion equations in Hilbert space $H$ 
\begin{eqnarray*} \label{ch1.1}
	\left \{ \begin{array}{lll} \dfrac{\partial\vartheta(t)}{\partial t}+{A}\vartheta(t)=\sum\limits_{i=1}^{k}a_i\Big({}_0I_t^{\alpha_i,\rho_i}\vartheta(t)\Big)+f(t,\vartheta_t),&&t \in [0,T],
		\\\vartheta_0=\chi\mbox{ on } [-\nu,0],
	\end{array}\right.
\end{eqnarray*}
in which they studied the existence of a strong solution by employing Rothe’s method. If we take $\rho_i=1$, then the above problem changes into the following form
\begin{eqnarray*}
	\left \{ \begin{array}{lll} \dfrac{\partial\vartheta(t)}{\partial t}+{A}\vartheta(t)=\sum\limits_{i=1}^{k}a_i\Big({I}_t^{\alpha_i}\vartheta(t)\Big)+f(t,\vartheta_t),&&t \in [0,T],\\
		\vartheta_0=\chi\mbox{ on } [-\nu,0],
	\end{array}\right.
\end{eqnarray*}
But in our paper, we considered the multi-term fractional delay differential equation in a real Hilbert space ${H}$ given by (\ref{1.1}), where the Caputo derivative is considered instead of fractional integral.

The outline of this paper is as follows. We provide some necessary definitions, assumptions, and lemma in Section 2. In Section 3, we first discretize the interval $[-\nu,0]$ using a uniform time mesh in delay problem \cite{KMA2022}. To be able to apply Rothe’s method, we need to restrict the time frame to $[0, T_0]$ with $T_0:= \lfloor \frac{T}{\nu}\rfloor\nu$ assuming $\nu\leq T$. Furthermore, priori estimates and a few necessary lemmas are proved in Section 3. The existence of a strong solution is presented in Section 4, while in Section 5, an example is provided in support of main result. A conclusion is included at the end for future work.
\section{\textbf{Preliminaries and Assumptions}}
\begin{definition}\cite{I1999}
	The fractional derivative of Caputo  type for a function $g$ of order $0<\alpha<1$ is defined as
	\begin{eqnarray*}
		{}^C{D}_t^{\alpha_q} g(t)=\frac{1}{\Gamma(1-\alpha)}\int_0^t\frac{g'(s)}{(t-s)^{\alpha}}ds.
	\end{eqnarray*}
\end{definition}
\begin{definition}\cite{I1999}
	The fractional integral of Riemann-Liouville type for a function $g$ of order $\alpha>0$ is defined as
	\begin{eqnarray*}
		{I}^{\alpha}_t g(t)=\frac{1}{\Gamma(\alpha)}\int_0^t\frac{g(s)}{(t-s)^{1-\alpha}}ds.
	\end{eqnarray*}
\end{definition}
\begin{definition}\cite{A1983}
	Let $(X, \|\cdot\|)$ be a real normed space and $(X^*, \|\cdot\|_*)$ be its dual. Then for each $x\in X,$ the duality mapping $J$ is defined as
	\begin{eqnarray*}
		J(x)=\left\{x^*\in X^*~ |~ \langle x,x^*\rangle =\|x\|^2=\|x^*\|_*^2\right\},
	\end{eqnarray*} 
	where $\langle x,x^*\rangle$ represents the value of $x^*$ at $x.$
\end{definition}
It is well-known, see \cite[Theorem 1.2]{V1993}, that if $X^*$ is strictly convex, then the duality
mapping $J$ is single valued and demicontinuous. In particular, if $X$ is a Hilbert
space, then the duality mapping $J$ becomes the identity operator $I$.
\begin{definition}
	Let $X$ be a Banach space. A single valued operator $A$ is called accretive if 
	\begin{eqnarray*}
		\left\langle {A}x_1-Ax_2, J\left(x_1-x_2\right)\right\rangle \geq 0 \quad \text{ for each } \quad x_1, x_2\in D(A).
	\end{eqnarray*}
	A single valued accretive operator $A$ is called $m$-accretive if 
	\begin{eqnarray*}
		R(I+\lambda A)=X \quad \text { for }\quad \lambda>0.	
	\end{eqnarray*}
\end{definition}
For an accretive operator $A$, we introduce the following sequence of operators $J_{\lambda}$ and $A_{\lambda}$ from $R(I+\lambda A)$ into $X$ by
\begin{eqnarray*}
	J_{\lambda}x&=& (I+\lambda A)^{-1}x \text{ for } x\in R(I+\lambda A), \\
	A_{\lambda}x&=&AJ_{\lambda}x= \lambda^{-1}(I-J_{\lambda})x \text{ for } x\in R(I+\lambda A),
\end{eqnarray*}
where $R(I+\lambda A)$ denotes the range of operator $I+\lambda A$ and the operator $A_{\lambda}$ is called the Yosida approximation of $A$ (for details, see \cite[p.151]{Reich1980} or \cite[p.101]{V1993}).
\begin{proposition}\label{accrativeproperty}\cite{V1993}
	Let $A: X\to 2^X$ be an $m$-accretive operator. Then $A$ is closed and if $\lambda_n\in \mathbb{R}$ and $x_n\in X$ are such that
	\begin{eqnarray*}
		\lambda_n\to 0, \quad x_n\to x, \quad A_{\lambda_n}x_n\to y, \quad \text{as}\quad  n\to \infty, 
	\end{eqnarray*}
	then $y\in Ax$. If $X^*$ is uniformly convex, then $A$ is demiclosed, and if
	\begin{eqnarray*}
		\lambda_n\to 0, \quad x_n\to x, \quad A_{\lambda_n}x_n\rightharpoonup y, \quad \text{as}\quad  n\to \infty, 
	\end{eqnarray*}
	then $y\in Ax$. 
\end{proposition}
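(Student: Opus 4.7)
The plan is to prove the four assertions in two waves: first the closedness of $A$ and the strong Yosida statement, then upgrade both to their weak analogues using the extra hypothesis that $X^{*}$ is uniformly convex.

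For closedness, take $(x_n, y_n)$ in the graph of $A$ with $x_n \to x$ and $y_n \to y$; the goal is to show $y \in Ax$. Apply $m$-accretivity with $\lambda = 1$ to pick $u \in D(A)$ and $v \in Au$ with $u + v = x + y$, which is possible since $R(I + A) = X$. Writing the accretivity inequality between $(x_n, y_n)$ and $(u, v)$ yields $\langle y_n - v, J(x_n - u)\rangle \geq 0$ for a suitable duality selection. Passing to the limit (using the norm-to-weak${}^{*}$ upper semicontinuity of the duality map on bounded sets) gives $\langle y - v, J(x - u)\rangle \geq 0$. Since $y - v = -(x - u)$ by construction, the left-hand side equals $-\|x - u\|^2$, forcing $u = x$ and hence $v = y \in Ax$. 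The strong Yosida statement is then immediate: put $z_n := J_{\lambda_n} x_n = x_n - \lambda_n A_{\lambda_n} x_n$, so that $A_{\lambda_n} x_n \in A z_n$, and observe $z_n \to x$ strongly (because $\{A_{\lambda_n} x_n\}$ is bounded and $\lambda_n \to 0$). Closedness applied to the graph points $(z_n, A_{\lambda_n} x_n)$ then gives $y \in Ax$.

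For the demiclosedness assertion, the uniform convexity of $X^{*}$ is used precisely to ensure that $J: X \to X^{*}$ is single-valued and norm-to-norm continuous on bounded sets. Replaying the accretivity argument with $x_n \to x$ strongly and $y_n \rightharpoonup y$ only weakly, the continuity of $J$ forces $J(x_n - u) \to J(x - u)$ strongly in $X^{*}$, so the bilinear pairing of the weakly convergent sequence $\{y_n - v\}$ with the strongly convergent sequence $\{J(x_n - u)\}$ passes to the limit, and the same cancellation $y - v = -(x - u)$ concludes $y \in Ax$. The weak Yosida statement then follows by the identical reduction $(z_n, y_n) := (J_{\lambda_n} x_n, A_{\lambda_n} x_n)$, with closedness replaced by demiclosedness and using that the weakly convergent sequence $\{A_{\lambda_n} x_n\}$ is norm bounded to ensure $z_n \to x$ strongly. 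The main obstacle in this strategy is the limit-passage in the accretivity inequality in the weak case: without the extra regularity of $J$ supplied by uniform convexity of $X^{*}$, one cannot control $J(x_n - u)$ and the cancellation collapses; everything else is bookkeeping around the range-condition choice $u + v = x + y$.
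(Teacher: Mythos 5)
The paper does not prove this proposition; it is quoted verbatim from Barbu [V1993] as a known result. Your argument is correct and is essentially the standard proof found there: the range condition $R(I+A)=X$ supplies the comparison pair $(u,v)$ with $u+v=x+y$, accretivity plus the cancellation $y-v=-(x-u)$ forces $u=x$, and both Yosida statements reduce to (demi)closedness via $J_{\lambda_n}x_n=x_n-\lambda_nA_{\lambda_n}x_n\to x$. The only step needing a little more care than you give it is the limit passage in the closedness argument when $J$ is multivalued (one must extract a weak$^{*}$ cluster point $j$ of a selection $j_n\in J(x_n-u)$ and check $j\in J(x-u)$, which suffices since $\langle x-u,j\rangle=\|x-u\|^{2}$ for every such $j$); in the Hilbert-space setting actually used in the paper, $J=I$ and this issue disappears.
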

Recall that an operator$A: X\to 2^X$ is said to be closed, if $x_n\to x, y_n\to y$ and $y_n\in Ax_n$, then $y\in Ax$. Also, A is said to be demiclosed, if  $x_n\to x, y_n\rightharpoonup y$ and $y_n\in Ax_n$ yield  $y\in Ax$. Here, the symbols ``$\rightarrow$'' and ``$\rightharpoonup$''
stand for the strong convergence in $X$ and weak convergence in $X$.

\begin{lemma}\label{2}\cite{A1983}
	If $-{A}$ is an infinitesimal generator of a $C_0$ semigroup of contractions, then ${A}$ is an $m$-accretive operator.
\end{lemma}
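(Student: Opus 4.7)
The goal is to verify the two defining properties of an $m$-accretive operator: (i) the inequality $\langle Ax_1 - Ax_2, J(x_1 - x_2)\rangle \geq 0$ for all $x_1,x_2 \in D(A)$, and (ii) the range condition $R(I+\lambda A) = X$ for every $\lambda > 0$. Since $-A$ generates a $C_0$ semigroup, $A$ is automatically linear, closed, and densely defined, so I will freely use linearity in what follows.

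For accretivity, let $\{S(t)\}_{t\geq 0}$ denote the $C_0$ semigroup of contractions generated by $-A$, so $\|S(t)\| \leq 1$ for all $t \geq 0$. Fix $x_1, x_2 \in D(A)$ and put $z = x_1 - x_2$. By linearity, $S(t)z = S(t)x_1 - S(t)x_2$, and from the defining property of the duality mapping together with Cauchy–Schwarz (in its general Banach-space guise, $|\langle u, J(v)\rangle| \leq \|u\|\,\|v\|$), I would estimate
\begin{equation*}
\langle S(t)z - z,\, J(z)\rangle \leq \|S(t)z\|\,\|z\| - \|z\|^2 \leq \|z\|^2 - \|z\|^2 = 0 .
\end{equation*}
Dividing by $t > 0$ and letting $t \to 0^+$, the difference quotient $\bigl(S(t)z - z\bigr)/t$ converges strongly to $-Az = -(Ax_1 - Ax_2)$ because $z \in D(A)$. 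Since $J(z)$ is fixed, continuity of the duality pairing yields $\langle -Ax_1 + Ax_2,\, J(x_1 - x_2)\rangle \leq 0$, which is exactly the accretivity inequality.

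For the range condition, I would invoke the Hille–Yosida theorem: since $-A$ generates a $C_0$ semigroup of contractions, every $\mu > 0$ lies in the resolvent set of $-A$, which is equivalent to saying $(\mu I + A) : D(A) \to X$ is a bijection with bounded inverse. Given $\lambda > 0$, write $I + \lambda A = \lambda\bigl(\lambda^{-1} I + A\bigr)$; taking $\mu = \lambda^{-1}$, the Hille–Yosida conclusion gives $R(\lambda^{-1}I + A) = X$, and multiplying by the scalar $\lambda$ preserves surjectivity, so $R(I + \lambda A) = X$.

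The only delicate point is the accretivity step in a non-Hilbert Banach space, where $J$ is multivalued in general; however, the computation above only needs one element of $J(z)$ (not all of them), and the Banach-space Cauchy–Schwarz bound $\langle u, J(v)\rangle \leq \|u\|\|v\|$ is valid for any selection. In the Hilbert-space setting that the paper actually works in, $J$ reduces to the identity and the whole argument collapses to the textbook one-line computation $\frac{d}{dt}\|S(t)z\|^2\big|_{t=0^+} = -2\langle Az, z\rangle \leq 0$. The range condition is the part that is not "soft" — it is the hard direction of Hille–Yosida — but here it is imported as a black box, so no further work is required.
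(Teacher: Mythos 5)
Your proof is correct, and it is essentially the argument the paper implicitly relies on: the lemma is stated without proof and attributed to Pazy, where the accretivity half is exactly your computation $\langle S(t)z - z, J(z)\rangle \leq \|S(t)z\|\,\|z\| - \|z\|^2 \leq 0$ followed by division by $t$ and passage to the limit (the dissipativity direction of the Lumer--Phillips theorem), and the range condition is the resolvent surjectivity from Hille--Yosida rescaled via $I+\lambda A = \lambda(\lambda^{-1}I + A)$. Your closing remarks correctly identify the only subtleties (multivaluedness of $J$ in a general Banach space, which is moot in the paper's real Hilbert-space setting, and the fact that the range condition is the genuinely nontrivial ingredient imported from Hille--Yosida).
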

\begin{definition} 
	The state function $\vartheta\in C\big([-\nu,T];{H}\big)$ is said to be a strong solution of (\ref{1.1}), if it satisfies the following:
	\begin{itemize}
		\item [1.]$\vartheta\in D(A)$ a.e. on $[0, T]$ and $\vartheta(t)=\chi(t),~ t\in [-\nu, 0]$;
		
		\item[2.] $\vartheta$ is differentiable a.e. on $[0, T]$;
		
		\item[3.]  $\vartheta$ satisfies (\ref{1.1}) a.e. on $[0, T]$.
		
	\end{itemize}
\end{definition}
The following assumptions hold throughout the paper.
\begin{itemize}
	\item[(A1)] $-{A}$ is an infinitesimal generator of a $C_0$ semigroup of contractions.	
	
	\item [(A2)]
	The nonlinear function $f:[0,T]\times C\big([-\nu,0];{H}\big)\rightarrow {H}$ satisfies the following condition
	\begin{eqnarray*}
		\big\|f(t_1,x_1)-f(t_2, x_2)\big\|
		&\leq& L_{f}\left(\big|t_1-t_2\big|+\big\|x_1-x_2\big\|\right),	
	\end{eqnarray*}
	for all $t_1,t_2\in [0,T],~x_1,x_2\in B_{\epsilon}\big(C\big([-\nu,0];{H}\big),\chi(0)\big),~L_{f}$ is a positive constant, where
	\begin{eqnarray*}
		B_{\epsilon}\big(C\big([-\nu,0];{H}\big),\chi(0)\big)=\Big\{y \in C\big([-\nu,0];{H}\big): \big\|y-\chi(0)\big\|<\epsilon \Big\}.
	\end{eqnarray*}
	\item[(A3)] $\chi(t) \in D({A}),~ t \in [-\nu,0].$
	
	\item[(A4)] The function $\chi(t)$ satisfies the following condition
	\begin{eqnarray*}
		\big\|\chi(t_1)-\chi(t_2)\big\|
		&\leq& L_{\chi}|t_1-t_2|,	
	\end{eqnarray*}
	for all $t_1,t_2\in [-\nu,0]$, where $L_{\chi}$ is a positive constant.
\end{itemize} 
\section{\textbf{Discretization scheme and a priori estimates}}
Rothe’s method is utilized to show the existence of a solution. First, the time interval $[-\nu,0]$ is discretized by a
time step $h<\min\{1,\nu\}$ defined by $h=\frac{\nu}{n}$,  where $n$ is a positive integer. Next, we define
\begin{eqnarray*}
	T_0:= \left\lfloor \frac{T}{\nu}\right\rfloor\nu, \quad \nu\leq T.
\end{eqnarray*}
We will show the existence of a solution on the time interval $[0, T_0]$. The time discrete points are given by $t_j=jh$ for all $-n\leq j\leq m$, where $m= \frac{T_0}{h}= \left\lfloor \frac{T}{\nu}\right\rfloor n$. The $\vartheta_j$ denotes the approximate solution at time $t = t_j$ for  $-n\leq j\leq m$. Moreover,
the time derivative of $\vartheta_j$ at time $t = t_j$ is approximated by the backward Euler finite-difference formula
\begin{eqnarray*}
	\delta\vartheta_j=\dfrac{\vartheta_j-\vartheta_{j-1}}{h},\quad 1\leq j\leq m.
\end{eqnarray*}
Also, the Caputo derivative of $\vartheta$ at $t=t_j$ is approximated by (see \cite{AD2019})
\begin{eqnarray}\label{3.5}
	{}^C{D}_t^{\alpha_q}\vartheta_j= \frac{h^{1-\alpha_q}}{\Gamma(2-\alpha_q)}\sum\limits_{i=1}^{j}b_{j-i}\delta\vartheta_i,
\end{eqnarray}
where $b_i=(i+1)^{1-\alpha_q}-i^{1-\alpha_q}$ for $i=1,2,\dots.$

\noindent Then, the problem (\ref{1.1}) is approximated at time $t = t_j$ as follows:
\begin{problem}
	Find $\{\vartheta_j\}\subset {H}$ such that 
	\begin{eqnarray} \label{3.1}
		\dfrac{\vartheta_j-\vartheta_{j-1}}{h}+\sum\limits_{q=1}^{k}a_q{{}^C{D}_t^{\alpha_q}}\vartheta_j+{A}\vartheta_j=f\big(t_j,\vartheta_{j-n}\big),\quad 1\leq j \leq m,
	\end{eqnarray}
	where
	\begin{eqnarray*}
		\vartheta_j=\chi(t_j),  \quad -n\leq j\leq 0.
	\end{eqnarray*}
\end{problem}
Using equation (\ref{3.5}), the discrete problem can be equivalently written as
\begin{eqnarray} \label{3.6}
	\delta\vartheta_j+\sum\limits_{q=1}^{k}\frac{a_qh^{1-\alpha_q}}{\Gamma(2-\alpha_q)}\sum\limits_{i=1}^{j}b_{j-i}\delta\vartheta_i+{A}\vartheta_j=f\big(t_j,\vartheta_{j-n}\big).
\end{eqnarray}
The existence of unique $\vartheta_j\in D(A)$ satisfying (\ref{3.1}) is a consequence of the m-accretivity of $A$.
\begin{lemma}\label{3l1}
	If the assumptions (A1)–(A4) are satisfied, then for any $j = 1, 2,\dots,m$,
	\begin{eqnarray}\label{3.7}
		\max_{j=1,2,\dots,m}\left\|\vartheta_j-\chi(0)\right\|\leq C.	
	\end{eqnarray}
\end{lemma}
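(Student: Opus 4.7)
Set $w_j := \vartheta_j - \chi(0)$, so that $w_0 = 0$. The plan is to test the discrete identity (\ref{3.6}) against $w_j$ in the $H$-inner product, derive an energy-type inequality for $\|w_j\|^2$, and close by induction on $j$ combined with a discrete (fractional) Gronwall lemma. The induction naturally splits into the regimes $j \le n$ (where the delayed argument $\vartheta_{j-n} = \chi(t_{j-n})$ is initial data) and $j > n$ (where $\vartheta_{j-n}$ is a previously computed iterate controlled by the induction hypothesis).

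\textbf{Term-by-term estimates.} For the backward-Euler piece I would use $2\langle w_j - w_{j-1}, w_j\rangle \ge \|w_j\|^2 - \|w_{j-1}\|^2$, which telescopes cleanly upon summation. For the nonlinear operator I would invoke (A3) and the $m$-accretivity of $A$ (Lemma \ref{2}) to write $\langle A\vartheta_j, w_j\rangle = \langle A\vartheta_j - A\chi(0), w_j\rangle + \langle A\chi(0), w_j\rangle \ge -\|A\chi(0)\|\,\|w_j\|$, trading the unknown piece for a benign linear term. For each $q$-th discrete Caputo contribution I would exploit $w_0 = 0$ and summation by parts, $\sum_{i=1}^{j} b_{j-i}(w_i - w_{i-1}) = w_j - \sum_{k=1}^{j-1}(b_{k-1} - b_k)\,w_{j-k}$, observing that $b_{k-1} - b_k \ge 0$ by concavity of $x \mapsto x^{1-\alpha_q}$ and $\sum_{k=1}^{j-1}(b_{k-1}-b_k) = 1 - b_{j-1} \le 1$. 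Pairing with $w_j$ and applying Young's inequality then yields a lower bound of the form $\tfrac12\|w_j\|^2 - \tfrac12\sum_{k=1}^{j-1}(b_{k-1}-b_k)\,\|w_{j-k}\|^2$, whose memory piece is a proper convex combination of previous squared norms.

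\textbf{Delay and Gronwall.} The source is controlled via (A2): for $j \le n$, (A4) yields $\|\chi(t_{j-n}) - \chi(0)\| \le L_{\chi}\nu$, hence $\|f(t_j, \vartheta_{j-n})\| \le C_0$ independent of $h$; for $j > n$, $\|\vartheta_{j-n} - \chi(0)\| = \|w_{j-n}\|$ is controlled by the induction hypothesis, yielding $\|f(t_j,\vartheta_{j-n})\| \le C_1 + L_f \max_{i \le j-1}\|w_i\|$. Combining the estimates above, summing over $j = 1,\dots,J$, and applying a discrete Gronwall lemma then delivers the uniform bound $\|w_J\| \le C$.

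\textbf{Main obstacle.} The principal difficulty is keeping $C$ independent of $h$. Since $N_q := a_q h^{1-\alpha_q}/\Gamma(2-\alpha_q) \to 0$ while the number of steps $m \sim h^{-1} \to \infty$, crude absolute-value bounds on the convolution $\sum_i b_{j-i}\langle w_i - w_{i-1}, w_j\rangle$ produce constants that blow up as $h \to 0$. The sharp argument must leverage either the (aggregate) non-negativity of the L1 kernel's quadratic form or a dedicated discrete fractional Gronwall inequality, together with the coercivity afforded by the $\delta\vartheta_j$ term to dominate the memory piece. This is the technical heart of the lemma and closely parallels the treatment of multi-term fractional problems in Migórski--Zeng \cite{SZS2019}.
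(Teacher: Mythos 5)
Your proposal is sound in outline but follows a genuinely different route from the paper's own proof. The paper multiplies (\ref{3.6}) by $\vartheta_j-\chi(0)$, reduces to a recursion in the first powers $\|\vartheta_j-\chi(0)\|$, moves the entire discrete convolution to the right-hand side, and bounds it by absolute values via the telescoping estimate $\sum_{i=1}^{j-1}b_{j-i-1}\leq l^{1-\alpha_q}$ borrowed from \cite{SZS2019}, before invoking the discrete Gronwall lemma. You instead work with squared norms, keep the convolution on the left, and exploit its sign structure: monotonicity of the weights $b_k$ and $\sum_{k=1}^{j-1}(b_{k-1}-b_k)=1-b_{j-1}\leq 1$ make the memory term a convex combination of earlier $\|w_i\|^2$, matched against the coercive $\tfrac12\|w_j\|^2$ piece of the same term. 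This buys exactly what you claim it should: setting $Y_j:=\max_{i\leq j}\|w_i\|^2$, your identity gives $\sum_k(b_{k-1}-b_k)\|w_{j-k}\|^2\leq Y_{j-1}$, and distinguishing the cases $\|w_j\|^2\geq Y_{j-1}$ and $\|w_j\|^2<Y_{j-1}$ makes the Caputo contribution nonpositive precisely when it matters, leaving a recursion $Y_j\leq Y_{j-1}+Ch(1+Y_j)+\cdots$ with $O(h)$ increments and hence an $h$-uniform constant. So the ``main obstacle'' you defer at the end is in fact already resolved by your own summation-by-parts computation; you should carry out that last piece of bookkeeping rather than leaving it as a pointer to a fractional Gronwall inequality. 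Two further points where your route is the more careful one: you split the delay term into the regimes $j\leq n$ (where $\vartheta_{j-n}$ is initial data, handled by (A4)) and $j>n$ (where it is a previously computed iterate controlled by induction), whereas the paper's estimate (\ref{fdifference}) replaces $\vartheta_{i-n}$ by $\chi(t_{i-n})$ for every $i\leq j$, which is legitimate only for $j\leq n$; and the paper's absolute-value treatment of the convolution yields Gronwall coefficients $\gamma_i$ of order one per time step, so its stated constant $C_2=\lambda e^{\sum_i\gamma_i}$ grows with the number of steps and is not uniform in $h$ --- the very blow-up your strategy is designed to avoid.
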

\begin{proof} From (\ref{3.6}), for $j = 1$, one has
	\begin{eqnarray*}
		\vartheta_1-\chi(0)+\sum\limits_{q=1}^{k}\frac{a_qh^{1-\alpha_q}}{\Gamma(2-\alpha_q)}\big(\vartheta_1-\chi(0)\big)+h{A}\vartheta_1=hf\big(t_1,{\vartheta}_{1-n}\big).
	\end{eqnarray*}
	After rearranging, we get
	\begin{eqnarray*}
		\bigg(1+\sum\limits_{q=1}^{k}\frac{a_qh^{1-\alpha_q}}{\Gamma(2-\alpha_q)}\bigg)\big(	\vartheta_1-\chi(0)\big)+h{A}\big(\vartheta_1-\vartheta_0\big)=-hA\vartheta_0+hf\big(t_1,{\vartheta}_{1-n}\big).
	\end{eqnarray*}
	Multiplying the two sides of the above equation by $\vartheta_1-\vartheta_0$ and using the m-accretivity of ${A},$ we have
	\begin{eqnarray*}
		\bigg(1+\sum\limits_{q=1}^{k}\frac{a_qh^{1-\alpha_q}}{\Gamma(2-\alpha_q)}\bigg)\big\langle\vartheta_1-\chi(0),\vartheta_1-\chi(0)\big\rangle+h\big\langle{A}(\vartheta_1-\vartheta_0),\vartheta_1-\vartheta_0\big\rangle\\\hspace{4cm}=h\big\langle-A\vartheta_0,\vartheta_1-\vartheta_0\big\rangle +h\big\langle f\big(t_1,{\vartheta}_{1-n}\big), \vartheta_1-\vartheta_0\big\rangle.
	\end{eqnarray*}
	which implies that
	\begin{eqnarray*}
		\bigg(1+\sum\limits_{q=1}^{k}\frac{a_qh^{1-\alpha_q}}{\Gamma(2-\alpha_q)}\bigg)\big\|	\vartheta_1-\chi(0)\big\|^2\leq h\big\|A\vartheta_0\big\|\big\|\vartheta_1-\vartheta_0\big\|+h\big\|f\big(t_1,{\vartheta}_{1-n}\big)\big\|\big\|\vartheta_1-\vartheta_0\big\|.
	\end{eqnarray*}
	We divide both sides of the above inequality by $\big\|\vartheta_1-\vartheta_0\big\|$, then we get
	\begin{eqnarray*}
		\bigg(1+\sum\limits_{q=1}^{k}\frac{a_qh^{1-\alpha_q}}{\Gamma(2-\alpha_q)}\bigg)\big\|	\vartheta_1-\chi(0)\big\|\leq h\big\|A\vartheta_0\big\|+h\big\|f\big(t_1,{\vartheta}_{1-n}\big)\big\|.
	\end{eqnarray*}
	Since $	\bigg(1+\sum\limits_{q=1}^{k}\dfrac{a_qh^{1-\alpha_q}}{\Gamma(2-\alpha_q)}\bigg)>1,$ we have
	\begin{eqnarray*}
		\big\|\vartheta_1-\chi(0)\big\|
		&\leq& h\big\|A\vartheta_0\big\|+h\big\|f(t_1,{\vartheta}_{1-n})\big\|\\
		&\leq&h\big\|A\vartheta_0\big\|+h\left[\big\|f(t_1,\chi(t_{1-n}))-f(0, \chi(t_{-n}))\big\|+\big\|f(0,\chi(t_{-n}))\big\| \right].
	\end{eqnarray*}
	Using (A2) and (A4), we have
	\begin{eqnarray*}
		\big\|\vartheta_1-\chi(0)\big\|
		&\leq&h\big\|A\vartheta_0\big\|+h\left[L_f\left[|t_1-0|+\|\chi(t_{1-n})-\chi(t_{-n})\|\right]+\big\|f(0,\chi(t_{-n}))\big\| \right]\\
		&\leq&h\big\|A\vartheta_0\big\|+h\left[L_f\left[|t_1|+L_{\chi}|t_{1-n}-t_{-n}|\right]+\big\|f(0,\chi(t_{-n}))\big\| \right]\\
		&\leq&C_1
	\end{eqnarray*}
	where $
	C_1=T\big\|A\vartheta_0\big\|+T\left[L_fT(1+L_{\chi})+\big\|f(0,\chi(t_{-n}))\big\| \right]$.\\
	
	\noindent For $j\geq2$ in (\ref{3.6}), we get
	\begin{eqnarray*}
		\vartheta_j-\vartheta_{j-1}+\sum\limits_{q=1}^{k}\frac{a_qh^{1-\alpha_q}}{\Gamma(2-\alpha_q)}\sum\limits_{i=1}^{j}b_{j-i}\big(\vartheta_i-\vartheta_{i-1}\big)+h{A}\vartheta_j=hf(t_j,{\vartheta}_{j-n}).
	\end{eqnarray*}
	We can write it as
	\begin{eqnarray*}
		\vartheta_j-\vartheta_{j-1}+\sum\limits_{q=1}^{k}\frac{a_qh^{1-\alpha_q}}{\Gamma(2-\alpha_q)}\left[\vartheta_j+\sum\limits_{i=1}^{j-1}(b_{j-i}-b_{j-i-1})\vartheta_i-b_{j-1}\vartheta_{0}\right]+h{A}\vartheta_j=hf(t_j,{\vartheta}_{j-n}).
	\end{eqnarray*}
	After some simplification, we write it as
	\begin{eqnarray*}
		&&\bigg(1+\sum\limits_{q=1}^{k}\dfrac{a_qh^{1-\alpha_q}}{\Gamma(2-\alpha_q)}\bigg)\vartheta_j+h{A}\vartheta_j\\
		&&\quad=\vartheta_{j-1}+\sum\limits_{q=1}^{k}\frac{a_qh^{1-\alpha_q}}{\Gamma(2-\alpha_q)}\sum\limits_{i=1}^{j-1}b_{j-i-1}\vartheta_i-\sum\limits_{q=1}^{k}\frac{a_qh^{1-\alpha_q}}{\Gamma(2-\alpha_q)}\sum\limits_{i=1}^{j-1}b_{j-i}\vartheta_i\\
		&&\quad\quad+\sum\limits_{q=1}^{k}\frac{a_qh^{1-\alpha_q}}{\Gamma(2-\alpha_q)}b_{j-1}\vartheta_0+hf(t_j,{\vartheta}_{j-n}).
	\end{eqnarray*}
	Since $\bigg(1+\sum\limits_{q=1}^{k}\dfrac{a_qh^{1-\alpha_q}}{\Gamma(2-\alpha_q)}\bigg)>1,$ we have
	\begin{eqnarray*}
		&&\vartheta_j-\chi(0)+h{A}(\vartheta_j-\chi(0))\\
		&&\quad=\vartheta_{j-1}-\chi(0)-hA\chi(0)+\sum\limits_{q=1}^{k}\frac{a_qh^{1-\alpha_q}}{\Gamma(2-\alpha_q)}\sum\limits_{i=1}^{j-1}b_{j-i-1}\vartheta_i-\sum\limits_{q=1}^{k}\frac{a_qh^{1-\alpha_q}}{\Gamma(2-\alpha_q)}\sum\limits_{i=1}^{j-1}b_{j-i}\vartheta_i\\
		&&\quad\quad+\sum\limits_{q=1}^{k}\frac{a_qh^{1-\alpha_q}}{\Gamma(2-\alpha_q)}b_{j-1}\vartheta_0+hf(t_j,{\vartheta}_{j-n}).
	\end{eqnarray*}
	Multiplying both sides of the inequality by $\vartheta_j-\chi(0)$, we have
	\begin{eqnarray*}
		&&\langle\vartheta_j-\chi(0), \vartheta_j-\chi(0) \rangle+\langle h{A}(\vartheta_j-\chi(0)),\vartheta_j-\chi(0) \rangle\\	&&\quad=\langle\vartheta_{j-1}-\chi(0), \vartheta_j-\chi(0) \rangle-\langle hA\chi(0), \vartheta_j-\chi(0)\rangle+\sum\limits_{q=1}^{k}\frac{a_qh^{1-\alpha_q}}{\Gamma(2-\alpha_q)}\sum\limits_{i=1}^{j-1}b_{j-i-1}\langle\vartheta_i, \vartheta_j-\chi(0)\rangle\\
		&&\quad-\sum\limits_{q=1}^{k}\frac{a_qh^{1-\alpha_q}}{\Gamma(2-\alpha_q)}\sum\limits_{i=1}^{j-1}b_{j-i}\langle\vartheta_i, \vartheta_j-\chi(0)\rangle+\sum\limits_{q=1}^{k}\frac{a_qh^{1-\alpha_q}}{\Gamma(2-\alpha_q)}b_{j-1}\langle\vartheta_0,\vartheta_j-\chi(0)\rangle\\
		&&\quad +\langle hf(t_j,{\vartheta}_{j-n}), \vartheta_j-\chi(0)\rangle.
	\end{eqnarray*}
	By the m-accretivity of operator ${A},$ we have
	\begin{eqnarray}\label{delta}
		\left\|\vartheta_j-\chi(0)\right\|	&\leq&\left\|\vartheta_{j-1}-\chi(0)\right\|+h\|A\chi(0)\|+\sum\limits_{q=1}^{k}\frac{a_qh^{1-\alpha_q}}{\Gamma(2-\alpha_q)}\sum\limits_{i=1}^{j-1}b_{j-i-1}\left\|\vartheta_i-\chi(0)\right\|\nonumber\\
		&&+\sum\limits_{q=1}^{k}\frac{a_qh^{1-\alpha_q}}{\Gamma(2-\alpha_q)}\sum\limits_{i=1}^{j-1}b_{j-i}\|\vartheta_i-\chi(0)\|+\sum\limits_{q=1}^{k}\frac{a_qh^{1-\alpha_q}}{\Gamma(2-\alpha_q)}\sum\limits_{i=1}^{j-1}b_{j-i-1}\left\|\chi(0)\right\|\nonumber\\
		&&+\sum\limits_{q=1}^{k}\frac{a_qh^{1-\alpha_q}}{\Gamma(2-\alpha_q)}\sum\limits_{i=1}^{j-1}b_{j-i}\|\chi(0)\|+\sum\limits_{q=1}^{k}\frac{a_qh^{1-\alpha_q}}{\Gamma(2-\alpha_q)}b_{j-1}\|\vartheta_0\|\nonumber\\
		&&+h\|f(t_j,{\vartheta}_{j-n})\|.
	\end{eqnarray}
	For the function $f$, we can write it as 
	\begin{eqnarray*}
		\|f(t_j,{\vartheta}_{j-n})\|
		&\leq& \sum\limits_{i=1}^{j}\left\|f(t_{i},{\vartheta}_{i-n})-f(t_{i-1},{\vartheta}_{i-1-n})\right\|+ \|f(t_{j-j},{\vartheta}_{j-j-n})\|\\
		&=&\sum\limits_{i=1}^{j}\left\|f(t_{i},\chi(t_{i-n}))-f(t_{i-1},\chi(t_{i-1-n}))\right\|+ \|f(0,\chi(t_{-n}))\|.
	\end{eqnarray*} 
	Using the assumption (A2) and (A4), we have
	\begin{eqnarray}\label{fdifference}
		\|f(t_j,{\vartheta}_{j-n})\|
		&\leq& \sum\limits_{i=1}^{j}L_f\left[|t_{i}-t_{i-1}|+\left\|\chi(t_{i-n})-\chi(t_{i-1-n})\right\|\right]+ \|f(0,\chi(t_{-n}))\|\nonumber\\
		&\leq& \sum\limits_{i=1}^{j}L_f\left[|t_{i}-t_{i-1}|+L_{\chi}|t_{i-n}-t_{i-1-n}|\right]+ \|f(0,\chi(t_{-n}))\|\nonumber\\
		&=&jhL_f(1+L_{\chi})+\|f(0,\chi(t_{-n}))\|.
	\end{eqnarray}
	Using (\ref{fdifference}) in (\ref{delta}), we have
	\begin{eqnarray}
		\left\|\vartheta_j-\chi(0)\right\|	&\leq&\left\|\vartheta_{j-1}-\chi(0)\right\|+h\|A\chi(0)\|+\sum\limits_{q=1}^{k}\frac{a_qh^{1-\alpha_q}}{\Gamma(2-\alpha_q)}\sum\limits_{i=1}^{j-1}b_{j-i-1}\left\|\vartheta_i-\chi(0)\right\|\nonumber\\
		&&+\sum\limits_{q=1}^{k}\frac{a_qh^{1-\alpha_q}}{\Gamma(2-\alpha_q)}\sum\limits_{i=1}^{j-1}b_{j-i}\|\vartheta_i-\chi(0)\|+\sum\limits_{q=1}^{k}\frac{a_qh^{1-\alpha_q}}{\Gamma(2-\alpha_q)}\sum\limits_{i=1}^{j-1}b_{j-i-1}\left\|\chi(0)\right\|\nonumber\\
		&&+\sum\limits_{q=1}^{k}\frac{a_qh^{1-\alpha_q}}{\Gamma(2-\alpha_q)}\sum\limits_{i=1}^{j-1}b_{j-i}\|\chi(0)\|+\sum\limits_{q=1}^{k}\frac{a_qh^{1-\alpha_q}}{\Gamma(2-\alpha_q)}b_{j-1}\|\vartheta_0\|\nonumber\\
		&&+h\left[jhL_f(1+L_{\chi})+\|f(0,\chi(t_{-n}))\|\right].
	\end{eqnarray} 
	Summing up the above inequality from $2$ to $l$, ~$2\leq l\leq m$, we have
	\begin{eqnarray*}
		\left\|\vartheta_l-\chi(0)\right\|	&\leq&\left\|\vartheta_{1}-\chi(0)\right\|+(l-1)h\|A\chi(0)\|+\sum\limits_{q=1}^{k}\frac{a_qh^{1-\alpha_q}}{\Gamma(2-\alpha_q)}\sum\limits_{j=2}^{l}\sum\limits_{i=1}^{j-1}b_{j-i-1}\left\|\vartheta_i-\chi(0)\right\|\\&&+\sum\limits_{q=1}^{k}\frac{a_qh^{1-\alpha_q}}{\Gamma(2-\alpha_q)}\sum\limits_{j=2}^{l}\sum\limits_{i=1}^{j-1}b_{j-i}\|\vartheta_i-\chi(0)\|+\sum\limits_{q=1}^{k}\frac{a_qh^{1-\alpha_q}}{\Gamma(2-\alpha_q)}\sum\limits_{j=2}^{l}\sum\limits_{i=1}^{j-1}b_{j-i-1}\left\|\chi(0)\right\|\\&&+\sum\limits_{q=1}^{k}\frac{a_qh^{1-\alpha_q}}{\Gamma(2-\alpha_q)}\sum\limits_{j=2}^{l}\sum\limits_{i=1}^{j-1}b_{j-i}\|\chi(0)\|+\sum\limits_{q=1}^{k}\frac{a_qh^{1-\alpha_q}}{\Gamma(2-\alpha_q)}\sum\limits_{j=2}^{l}b_{j-1}\|\vartheta_0\|\\
		&&+h\sum\limits_{j=2}^{l}\left[jhL_f(1+L_{\chi})+\|f(0,\chi(t_{-n}))\|\right].
	\end{eqnarray*}

	Combining this estimate with the fact that \cite{SZS2019}
	\begin{eqnarray*}
		\sum\limits_{j=2}^{l}\sum\limits_{i=1}^{j-1}b_{j-i-1}\|\vartheta_i-\chi(0)\|&=&\sum\limits_{j=2}^{l}\sum\limits_{i=1}^{j-1}\left[(j-i)^{1-\alpha_q}-(j-i-1)^{1-\alpha_q}\right]\|\vartheta_i-\chi(0)\|\\
		&=&\sum\limits_{j=2}^{l} \|\vartheta_i-\chi(0)\| \sum\limits_{i=1}^{l-j}\left[(l-i)^{1-\alpha_q}-(l-i-1)^{1-\alpha_q}\right]\\
		&\leq &l^{1-\alpha_q}\sum\limits_{j=2}^{l} \|\vartheta_i-\chi(0)\|,
	\end{eqnarray*}
	and the boundedness of $\vartheta_1-\chi(0)$, we have
	\begin{eqnarray*}
		\left\|\vartheta_l-\chi(0)\right\|	&\leq&\left\|\vartheta_{0}-\chi(0)\right\|+lh\|A\chi(0)\|+2\sum\limits_{q=1}^{k}\frac{a_qh^{1-\alpha_q}}{\Gamma(2-\alpha_q)}\left(l^{1-\alpha_q}\sum\limits_{j=2}^{l} \|\vartheta_i-\chi(0)\|\right)\\
		&&+\sum\limits_{q=1}^{k}\frac{a_qh^{1-\alpha_q}}{\Gamma(2-\alpha_q)}\sum\limits_{j=2}^{l}(j-1)^{1-\alpha_q}\left\|\chi(0)\right\|+\sum\limits_{q=1}^{k}\frac{a_qh^{1-\alpha_q}}{\Gamma(2-\alpha_q)}\sum\limits_{j=2}^{l}\left[j^{1-\alpha_q}-1\right]\|\chi(0)\|\\
		&&+\sum\limits_{q=1}^{k}\frac{a_qh^{1-\alpha_q}}{\Gamma(2-\alpha_q)}\sum\limits_{j=1}^{l}b_{j-1}\|\vartheta_0\|+h^2L_f(1+L_{\chi})\sum\limits_{j=1}^{l}j+ lh\|f(0,\chi(t_{-n}))\|\\
		&\leq&\left\|\vartheta_{0}-\chi(0)\right\|+lh\|A\chi(0)\|+2\sum\limits_{q=1}^{k}\frac{a_qT^{1-\alpha_q}}{\Gamma(2-\alpha_q)}\sum\limits_{j=2}^{l} \|\vartheta_i-\chi(0)\|\\
		&&+\sum\limits_{q=1}^{k}\frac{a_qh^{1-\alpha_q}}{\Gamma(2-\alpha_q)}\sum\limits_{j=2}^{l}(j-1)^{1-\alpha_q}\left\|\chi(0)\right\|+\sum\limits_{q=1}^{k}\frac{a_qh^{1-\alpha_q}}{\Gamma(2-\alpha_q)}\sum\limits_{j=2}^{l}\left[j^{1-\alpha_q}-1\right]\|\chi(0)\|\\
		&&+\sum\limits_{q=1}^{k}\frac{a_qh^{1-\alpha_q}}{\Gamma(2-\alpha_q)}\sum\limits_{j=1}^{l}b_{j-1}\|\vartheta_0\|+ h^2L_f(1+L_{\chi})\frac{l(l+1)}{2}+ lh\|f(0,\chi(t_{-n}))\|.
	\end{eqnarray*}
	Using discrete version of Gronwall inequality \cite{AD2019}, we get
	\begin{eqnarray*}
		\|\vartheta_l-\chi(0)\|&\leq&\lambda \exp\bigg(\sum\limits_{i=1}^{l}\gamma_i\bigg)=C_2,
	\end{eqnarray*}
	where 
	\begin{eqnarray*}
		\lambda&=&lT\|A\chi(0)\|+\sum\limits_{q=1}^{k}\frac{a_qT^{1-\alpha_q}}{\Gamma(2-\alpha_q)}\sum\limits_{j=1}^{l-2}\left[(j-1)^{1-\alpha_q}+j^{1-\alpha_q}-1\right]\|\chi(0)\|\\
		&&+\sum\limits_{q=2}^{k}\frac{a_qT^{1-\alpha_q}}{\Gamma(2-\alpha_q)}\sum\limits_{j=2}^{l}b_{j-1}\|\vartheta_0\|+T^2L_f(1+L_{\chi})\frac{l(l+1)}{2}+ lT\|f(0,\chi(t_{-n}))\|,\\
		\gamma_1&=&1,~ \gamma_i=2\sum\limits_{q=1}^{k}\frac{a_qT^{1-\alpha_q}}{\Gamma(2-\alpha_q)}, \mbox{~for~} ~i=2,3,\ldots,l, \mbox{~and~} C_2=\lambda e^{\sum\limits_{i=1}^{l}\gamma_i}.
	\end{eqnarray*}
	We put $C=\max\{C_1,C_2\}$ and get
	\begin{eqnarray*}
		\max_{j=1,2,\dots,m}\left\|\vartheta_j-\chi(0)\right\|\leq C.
	\end{eqnarray*}
\end{proof}	
\begin{lemma}\label{3l2}
	If (A1)–(A4) hold, then for every $j = 1, 2,\dots,m$, there exists a constant $C'$,
	\begin{eqnarray}\label{3.8}
		\max_{j=1,2,\dots,m}\left\|\delta\vartheta_j\right\|\leq C'.
	\end{eqnarray}
\end{lemma}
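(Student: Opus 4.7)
The plan is to mimic the strategy behind Lemma \ref{3l1} but now test against the difference quotient $\delta\vartheta_j$ instead of $\vartheta_j-\chi(0)$, using the $m$-accretivity of $A$ to kill the principal operator contribution. Concretely, I will rewrite (\ref{3.6}) as
\begin{eqnarray*}
(1+K_h)\delta\vartheta_j + \sum_{q=1}^k \frac{a_q h^{1-\alpha_q}}{\Gamma(2-\alpha_q)}\sum_{i=1}^{j-1}b_{j-i}^{(q)}\delta\vartheta_i + A\vartheta_j = f(t_j,\vartheta_{j-n}),
\end{eqnarray*}
where $K_h:=\sum_{q=1}^k a_q h^{1-\alpha_q}/\Gamma(2-\alpha_q)$ and $b_k^{(q)}=(k+1)^{1-\alpha_q}-k^{1-\alpha_q}$. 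Everything rests on the fact that $b_k^{(q)}$ is positive and decreasing in $k$ (since $x\mapsto x^{1-\alpha_q}$ is concave).

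For the base step $j=1$, all the convolution terms collapse to $\delta\vartheta_1$ itself, producing $(1+K_h)\delta\vartheta_1 + A\vartheta_1 = f(t_1,\chi(t_{1-n}))$. Rewriting $A\vartheta_1=(A\vartheta_1-A\chi(0))+A\chi(0)$, pairing with $\delta\vartheta_1=(\vartheta_1-\chi(0))/h$, and invoking accretivity yields $(1+K_h)\|\delta\vartheta_1\|\le \|A\chi(0)\|+\|f(t_1,\chi(t_{1-n}))\|$, which is uniformly bounded by (A1)--(A4). For $j\ge 2$, I will subtract the scheme at step $j-1$ from the scheme at step $j$. The discrete convolution structure gives
\begin{eqnarray*}
(1+K_h)\delta\vartheta_j - \delta\vartheta_{j-1} + \sum_{q=1}^k \frac{a_q h^{1-\alpha_q}}{\Gamma(2-\alpha_q)}\sum_{i=1}^{j-1}\bigl(b_{j-i}^{(q)}-b_{j-1-i}^{(q)}\bigr)\delta\vartheta_i + (A\vartheta_j-A\vartheta_{j-1}) = f(t_j,\vartheta_{j-n})-f(t_{j-1},\vartheta_{j-1-n}).
\end{eqnarray*}
Testing against $\delta\vartheta_j$ and using $\langle A\vartheta_j-A\vartheta_{j-1},\vartheta_j-\vartheta_{j-1}\rangle\ge 0$, then dividing by $\|\delta\vartheta_j\|$, produces
\begin{eqnarray*}
(1+K_h)\|\delta\vartheta_j\| \le \|\delta\vartheta_{j-1}\| + \sum_{q=1}^k \frac{a_q h^{1-\alpha_q}}{\Gamma(2-\alpha_q)}\sum_{i=1}^{j-1}\bigl(b_{j-1-i}^{(q)}-b_{j-i}^{(q)}\bigr)\|\delta\vartheta_i\| + \bigl\|f(t_j,\vartheta_{j-n})-f(t_{j-1},\vartheta_{j-1-n})\bigr\|,
\end{eqnarray*}
where the sign of the $b$-differences has been flipped using monotonicity.

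The inner sum telescopes to $b_0^{(q)}-b_{j-1}^{(q)}=1-b_{j-1}^{(q)}\le 1$, so bounding each $\|\delta\vartheta_i\|$ ($i\le j-1$) by $M_{j-1}:=\max_{1\le i\le j-1}\|\delta\vartheta_i\|$ gives an estimate $(1+K_h)\|\delta\vartheta_j\| \le (1+K_h)M_{j-1} + \|f\text{-diff}\|_j$. For the $f$-difference I will split into the two natural regimes dictated by the delay: when $j\le n$ both $\vartheta_{j-n}$ and $\vartheta_{j-1-n}$ come from $\chi$, so (A2)--(A4) give $\|f\text{-diff}\|_j \le L_f(1+L_\chi)h$; when $j>n$, $\|\vartheta_{j-n}-\vartheta_{j-1-n}\| = h\|\delta\vartheta_{j-n}\|$ and assumption (A2) yields $\|f\text{-diff}\|_j\le L_f h(1+\|\delta\vartheta_{j-n}\|) \le L_f h(1+M_{j-1})$. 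Collecting cases, $M_j \le (1+L_f h)M_{j-1} + L_f(1+L_\chi)h$, so a direct discrete Gronwall iteration (as used in Lemma \ref{3l1}) produces the $h$-independent bound $M_j\le C'$ on $[0,T_0]$.

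The main obstacle will be the bookkeeping in the differenced scheme, in particular verifying the sign of the weights $b_{j-1-i}^{(q)}-b_{j-i}^{(q)}$ and performing the telescoping uniformly in $q$; a second subtlety is correctly handling the delay in the $f$-difference, because for $j>n$ the value $\vartheta_{j-n}$ is the already-computed iterate rather than $\chi$, forcing the appearance of $\|\delta\vartheta_{j-n}\|$ on the right-hand side. Once both issues are dispatched, accretivity handles the operator term and the Gronwall estimate closes the induction.
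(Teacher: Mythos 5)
Your argument is essentially the paper's own proof: for $j=1$ you test the scheme against $\vartheta_1-\vartheta_0$ and use accretivity of $A$, and for $j\ge 2$ you difference consecutive discrete equations, test against $\delta\vartheta_j$, exploit the monotonicity of the weights $b_k^{(q)}$, and close with a discrete Gronwall iteration, exactly as in the paper. If anything, your handling of the $f$-difference is more careful than the paper's, which writes $\|\chi(t_{j-n})-\chi(t_{j-1-n})\|$ for every $j$ even though for $j>n$ the argument $\vartheta_{j-n}$ is a computed iterate rather than a value of $\chi$; your case split $j\le n$ versus $j>n$ (which introduces $\|\delta\vartheta_{j-n}\|\le M_{j-1}$ on the right-hand side) repairs this and still closes the recursion.
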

\begin{proof}
	For $j=1$ in (\ref{3.6}), subtracting ${A}\vartheta_0$ from both the sides, we obtain
	\begin{eqnarray*}
		\delta\vartheta_1+\sum\limits_{q=1}^{k}\frac{a_qh^{1-\alpha_q}}{\Gamma(2-\alpha_q)}\delta\vartheta_1+{A}(\vartheta_1-\vartheta_0)=f(t_1,{\vartheta}_{1-n})-{A}\vartheta_0,
	\end{eqnarray*}
	or, we can write it as
	\begin{eqnarray*}
		\bigg(1+\sum\limits_{q=1}^{k}\frac{a_qh^{1-\alpha_q}}{\Gamma(2-\alpha_q)}\bigg)\delta\vartheta_1+{A}(\vartheta_1-\vartheta_0)=f(t_1,{\vartheta}_{1-n})-{A}\vartheta_0.
	\end{eqnarray*}
	Multiplying both sides by $\vartheta_1- \vartheta_0,$ we obtain
	\begin{eqnarray*}
		\bigg(1+\sum\limits_{q=1}^{k}\frac{a_qh^{1-\alpha_q}}{\Gamma(2-\alpha_q)}\bigg)\left\langle \delta\vartheta_1,\vartheta_1-\vartheta_0\right\rangle+\big\langle {A}(\vartheta_1-\vartheta_0),\vartheta_1-\vartheta_0\big\rangle \\\hspace{4cm}=\big\langle- A\vartheta_0,\vartheta_1-\vartheta_0\big\rangle +\big\langle f(t_1,{\vartheta}_{1-n}), \vartheta_1-\vartheta_0\big\rangle.
	\end{eqnarray*}
	Using m-accretivity of ${A}$, we have
	\begin{eqnarray*}
		\bigg(1+\sum\limits_{q=1}^{k}\frac{a_qh^{1-\alpha_q}}{\Gamma(2-\alpha_q)}\bigg)\left\|\delta\vartheta_1\right\|&\leq&\big\|A\vartheta_0\big\| +\big\|f(t_1,{\vartheta}_{1-n})\big\|.
	\end{eqnarray*}
	Since $\bigg(1+\sum\limits_{q=1}^{k}\dfrac{a_qh^{1-\alpha_q}}{\Gamma(2-\alpha_q)}\bigg)>1$, we have
	\begin{eqnarray*}
		\left\|\delta\vartheta_1\right\|
		&\leq&\big\|A\vartheta_0\big\| +\big\|f(t_1,{\vartheta}_{1-n})\big\|\\
		&\leq&\big\|A\vartheta_0\big\|+\big\|f(t_1,\chi(t_{1-n}))-f(0, \chi(t_{-n}))\big\|+\big\|f(0,\chi(t_{-n}))\big\|\\
		&\leq&C_3,
	\end{eqnarray*}
	where $C_3=\big\|A\vartheta_0\big\|+L_fT(1+L_{\chi})+\big\|f(0,\chi(t_{-n}))\big\|$.
	
	\noindent For $j\geq2$, we consider the $j$th equation in (\ref{3.6}) and the $(j-1)$th equation in (\ref{3.6}), and then subtract these two equalities, we get
	\begin{eqnarray*}
		\delta\vartheta_j-\delta\vartheta_{j-1}+{A}(\vartheta_j-\vartheta_{j-1})
		&=&\sum\limits_{q=1}^{k}\frac{a_qh^{1-\alpha_q}}{\Gamma(2-\alpha_q)}\sum\limits_{i=1}^{j-1}b_{j-i-1}\delta\vartheta_i-\sum\limits_{q=1}^{k}\frac{a_qh^{1-\alpha_q}}{\Gamma(2-\alpha_q)}\sum\limits_{i=1}^{j}b_{j-i}\delta\vartheta_i\\
		&&\quad+f(t_j,{\vartheta}_{j-n})-f(t_{j-1},{\vartheta}_{j-1-n}).
	\end{eqnarray*}
	We rewrite it as
	\begin{eqnarray*}
		&&\bigg(1+\sum\limits_{q=1}^{k}\dfrac{a_qh^{1-\alpha_q}}{\Gamma(2-\alpha_q)}\bigg)\delta\vartheta_j+{A}(\vartheta_j-\vartheta_{j-1})\\
		&&\quad=\left(1+\sum\limits_{q=1}^{k}\dfrac{a_qh^{1-\alpha_q}}{\Gamma(2-\alpha_q)}(1-b_1)\right)\delta\vartheta_{j-1}+\sum\limits_{q=1}^{k}\frac{a_qh^{1-\alpha_q}}{\Gamma(2-\alpha_q)}\sum\limits_{i=1}^{j-2}(b_{j-i-1}-b_{j-i})\delta\vartheta_{i}\\
		&&\quad\quad+f(t_j,{\vartheta}_{j-n})-f(t_{j-1},{\vartheta}_{j-1-n}).
	\end{eqnarray*}
	Multiplying both sides by $\vartheta_j-\vartheta_{j-1}$ and using the $m$-accretivity of $A$, we have	
	\begin{eqnarray*}
		&&\bigg(1+\sum\limits_{q=1}^{k}\dfrac{a_qh^{1-\alpha_q}}{\Gamma(2-\alpha_q)}\bigg)\left\|\delta\vartheta_j\right\|\\
		&&\quad\leq\left(1+\sum\limits_{q=1}^{k}\dfrac{a_qh^{1-\alpha_q}}{\Gamma(2-\alpha_q)}(1-b_1)\right)\left\|\delta\vartheta_{j-1}\right\|+\sum\limits_{q=1}^{k}\frac{a_qh^{1-\alpha_q}}{\Gamma(2-\alpha_q)}\sum\limits_{i=1}^{j-2}(b_{j-i-1}-b_{j-i})\left\|\delta\vartheta_{i}\right\|\\
		&&\quad\quad+L_f\left(|t_j-t_{j-1}|+\|\chi(t_{j-n})-\chi(t_{j-1-n})\|\right).
	\end{eqnarray*}
	Since $\bigg(1+\sum\limits_{q=1}^{k}\dfrac{a_qh^{1-\alpha_q}}{\Gamma(2-\alpha_q)}\bigg)>1$, we have
	\begin{eqnarray*}
		\left\|\delta\vartheta_j\right\|	&\leq&\left(1+\sum\limits_{q=1}^{k}\dfrac{a_qh^{1-\alpha_q}}{\Gamma(2-\alpha_q)}(1-b_1)\right)\left\|\delta\vartheta_{j-1}\right\|+\sum\limits_{q=1}^{k}\frac{a_qh^{1-\alpha_q}}{\Gamma(2-\alpha_q)}\sum\limits_{i=1}^{j-2}(b_{j-i-1}-b_{j-i})\left\|\delta\vartheta_{i}\right\|\\
		&&\quad+hL_f(1+L_{\chi}).
	\end{eqnarray*}
	Using discrete version of Gronwall inequality \cite{AD2019}, we get
	\begin{eqnarray*}
		\left\|\delta\vartheta_j\right\|\leq C_4.
	\end{eqnarray*}
	Now, we take $C'=\max\{C_3,C_4\}$, and hence, we get 
	\begin{eqnarray*}
		\max_{j=1,2,\dots,m}\left\|\delta\vartheta_j\right\|\leq C'.
	\end{eqnarray*}
	This completes the proof.
\end{proof}
\begin{lemma}\label{3l3} For every $n\in \mathbb{N},~j=1,2,\ldots,m,$ there exists a constant $\tilde{C}$ such that
	\begin{eqnarray}\label{3.9}
		\left\|{}^C{D}_t^{\alpha_q}\vartheta_j\right\|\leq \tilde{C}.
	\end{eqnarray}
\end{lemma}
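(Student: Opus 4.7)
The plan is to treat Lemma \ref{3l3} as a direct corollary of Lemma \ref{3l2}, using the discrete Caputo formula (\ref{3.5}) combined with a telescoping identity for the coefficients $b_i$. Since the only nontrivial ingredient, the uniform bound on the backward differences $\delta\vartheta_j$, is already available, I do not expect any real obstacle.

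First I would write
\begin{eqnarray*}
\left\|{}^C{D}_t^{\alpha_q}\vartheta_j\right\|
\;\le\; \frac{h^{1-\alpha_q}}{\Gamma(2-\alpha_q)}\sum_{i=1}^{j} b_{j-i}\,\|\delta\vartheta_i\|
\end{eqnarray*}
by applying the triangle inequality in (\ref{3.5}) and using that $b_{j-i}\ge 0$ (because $s\mapsto s^{1-\alpha_q}$ is increasing). Next, by Lemma \ref{3l2} the differences are uniformly bounded, $\|\delta\vartheta_i\|\le C'$ for all $i=1,\dots,m$, so I can pull this constant out of the sum and reduce the task to estimating $\sum_{i=1}^{j} b_{j-i}$.

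The key observation is that after the reindexing $k=j-i$, this sum collapses telescopically:
\begin{eqnarray*}
\sum_{i=1}^{j} b_{j-i}
=\sum_{k=0}^{j-1}\bigl[(k+1)^{1-\alpha_q}-k^{1-\alpha_q}\bigr]
=j^{1-\alpha_q}.
\end{eqnarray*}
Substituting this back and using $jh=t_j\le T_0\le T$ yields
\begin{eqnarray*}
\left\|{}^C{D}_t^{\alpha_q}\vartheta_j\right\|
\;\le\;\frac{C'\,(jh)^{1-\alpha_q}}{\Gamma(2-\alpha_q)}
\;\le\;\frac{C'\,T^{1-\alpha_q}}{\Gamma(2-\alpha_q)},
\end{eqnarray*}
uniformly in $n$ and $j$. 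Setting $\tilde{C}:=C'\,T^{1-\alpha_q}/\Gamma(2-\alpha_q)$ (or, if one prefers a bound independent of the particular $q$, the maximum of these quantities over $q=1,\dots,k$) gives the claim. The only step worth a second look is the telescoping identity, but that is immediate from the definition $b_i=(i+1)^{1-\alpha_q}-i^{1-\alpha_q}$; there is no need for a Gronwall argument here, in contrast to the previous two lemmas.
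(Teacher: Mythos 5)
Your proof is correct. The overall strategy coincides with the paper's: start from the discrete formula (\ref{3.5}), apply the triangle inequality, invoke the uniform bound $\|\delta\vartheta_i\|\le C'$ from Lemma \ref{3l2}, and then control the remaining coefficient sum $\sum_{i=1}^{j}b_{j-i}$. Where you differ is in that last step: you evaluate the sum exactly by telescoping, $\sum_{i=1}^{j}b_{j-i}=j^{1-\alpha_q}$, so that $h^{1-\alpha_q}\sum_{i=1}^{j}b_{j-i}=t_j^{1-\alpha_q}\le T^{1-\alpha_q}$, whereas the paper bounds each term $h^{1-\alpha_q}b_{j-i}$ via an auxiliary estimate from a cited lemma and then compares the resulting sum with the integral $\int_0^{t_j}(t_j-s)^{-\alpha_q}\,ds$, arriving at the constant $C'C_{\alpha_q}T^{1-\alpha_q}/\bigl((1-\alpha_q)\Gamma(2-\alpha_q)\bigr)$. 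Your route is more elementary (no external lemma, no integral comparison) and yields the slightly sharper constant $C'T^{1-\alpha_q}/\Gamma(2-\alpha_q)$; the paper's integral-comparison technique is more robust in situations where the coefficients do not telescope exactly, but here the telescoping is available and your shortcut is perfectly legitimate. Your closing remark about taking the maximum over $q=1,\dots,k$ to get a $q$-independent constant is also the right normalization.
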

\begin{proof}
	Since ${}^C{D}_t^{\alpha_q}\vartheta_j= \dfrac{h^{1-\alpha_q}}{\Gamma(2-\alpha_q)}\sum\limits_{i=1}^{j}b_{j-i}\delta\vartheta_i,$ using Lemma \ref{3l2} and  \cite[Lemma 1]{AD2019}, we have
	\begin{eqnarray*}
		\big\|{}^C{D}_t^{\alpha_q}\vartheta_j\big\|&\leq& \frac{C'C_{\alpha_q}}{\Gamma(2-\alpha_q)}\sum\limits_{i=1}^{j}\frac{h}{[(j-i+1)h]^{\alpha_q}}\\
		&\leq& \frac{C'C_{\alpha_q}}{\Gamma(2-{\alpha_q})}\sum\limits_{i=1}^{j}\int_{t_{i-1}}^{t_i}\frac{ds}{(t_j-s)^{\alpha_q}}\\
		&=& \frac{C'C_{\alpha_q}}{\Gamma(2-{\alpha_q})}\int_{0}^{t_j}\frac{ds}{(t_j-s)^{\alpha_q}}\\
		&\le&\frac{C'C_{\alpha_q}T^{1-{\alpha_q}}}{(1-{\alpha_q})\Gamma(2-{\alpha_q})}=\tilde{C}.
	\end{eqnarray*}
\end{proof}
We define the Rothe's approximation $\{{U}_n\}$ and corresponding step functions $\{X_n\} \subseteq C_{T_0}$ of polygonal functions
\begin{eqnarray} \label{3.4}
	{U}_n(t)=	\left \{ \begin{array}{lll} \chi(t), && t\in[-\nu,0],
		\\\vartheta_{j-1}+(t-t_{j-1})\delta\vartheta_j,&&t \in(t_{j-1},t_{j}],\quad j=1,2,\ldots,m,
	\end{array}\right.
\end{eqnarray}
\begin{eqnarray} \label{3.10}
	X_n(t)=\left \{ \begin{array}{ll}
		\chi(t), \quad &t\in [-\nu, 0],
		\\ \vartheta_j, \quad &t\in(t_{j-1},t_j], \quad	j=1,2,\ldots,m,
	\end{array}\right.\hspace{2.9cm}
\end{eqnarray}
\section{\textbf{Main Result}}
\begin{theorem}\label{th1}
	If the assumptions (A1)-(A4) hold, then there exists a strong solution $\vartheta\in C([-\nu, T_0]; H)$ of (\ref{1.1}). Moreover $\vartheta$ is Lipschitz continuous function.
\end{theorem}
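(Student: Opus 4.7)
The plan is to combine the a priori estimates of Lemmas~\ref{3l1}--\ref{3l3} with an $m$-accretivity argument to extract a strong limit of the Rothe approximations $\{U_n\}$, and then to pass to the limit in the discrete equation~(\ref{3.6}) using the demiclosedness supplied by Proposition~\ref{accrativeproperty}. First I would observe that Lemma~\ref{3l2} gives $\|U_n'(t)\| = \|\delta\vartheta_j\| \leq C'$ for $t\in(t_{j-1},t_j]$, so each $U_n$ is Lipschitz on $[0,T_0]$ with constant $C'$; together with hypothesis (A4) this yields a uniform Lipschitz bound $L := \max\{C', L_\chi\}$ on the whole of $[-\nu,T_0]$. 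Lemma~\ref{3l1} supplies uniform boundedness, and the trivial estimate $\|U_n(t)-X_n(t)\|\leq hC'$ shows that $U_n$ and $X_n$ share the same limit whenever either one converges.

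Next I would establish that $\{U_n\}$ is Cauchy in $C([-\nu,T_0];H)$. For two indices $n,p$ I would subtract the discretised equations satisfied by the iterates, pair the resulting identity with $\vartheta_j^n-\vartheta_j^p$, and use the accretivity of $A$ to kill the $\langle A(\vartheta_j^n-\vartheta_j^p),\vartheta_j^n-\vartheta_j^p\rangle$ contribution. The nonlinear term is controlled by the Lipschitz bound (A2) applied to the history segments, and the multi-term Caputo convolution is absorbed via the discrete Gronwall inequality already used in the proof of Lemma~\ref{3l2}. This yields $\|U_n-U_p\|_{T_0}\to 0$ as $n,p\to\infty$, so $U_n\to\vartheta$ uniformly, with $\vartheta$ inheriting the Lipschitz constant $L$, which is exactly the Lipschitz continuity claim of the theorem. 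By the step-function estimate above, $X_n\to\vartheta$ uniformly as well.

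To identify $\vartheta$ as a strong solution I would exploit that $H$, being Hilbert, has the Radon--Nikodym property, so the Lipschitz limit $\vartheta$ is differentiable almost everywhere and $U_n'\rightharpoonup\vartheta'$ weakly-$\star$ in $L^\infty(0,T_0;H)$. Lemma~\ref{3l3} gives a uniform bound on the discrete Caputo sums, and by the standard convergence analysis of the $L1$ scheme these sums converge to ${}^C D_t^{\alpha_q}\vartheta(t)$ a.e. Rearranging~(\ref{3.6}) then shows that $\{AX_n(t)\}$ is bounded in $H$; Proposition~\ref{accrativeproperty} (demiclosedness of the $m$-accretive operator $A$) together with $X_n(t)\to\vartheta(t)$ forces $\vartheta(t)\in D(A)$ a.e.\ with $AX_n(t)\rightharpoonup A\vartheta(t)$. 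Uniform convergence $X_n\to\vartheta$ and (A2) handle the delay term $f(t_j,X_n(\cdot-\nu))\to f(t,\vartheta(t-\nu))$, and the initial condition $\vartheta(t)=\chi(t)$ on $[-\nu,0]$ is preserved by construction.

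The principal obstacle I anticipate is the Cauchy estimate of the second step: the multi-term Caputo convolution $\sum_{q}\frac{a_qh^{1-\alpha_q}}{\Gamma(2-\alpha_q)}\sum_i b_{j-i}\delta\vartheta_i$ couples every iterate to the full history, and when two different time steps $h_n\neq h_p$ are compared the convolution indices no longer align, so the Gronwall-style closure requires bounding the mismatched kernels uniformly while preserving the non-negative sign from accretivity. A secondary delicate point is the rigorous identification of the limit of the discrete Caputo sum with the continuous Caputo derivative at points where $\vartheta$ is only a.e.\ differentiable; here the uniform $L^\infty$ bound from Lemma~\ref{3l3} and a density/weak-compactness argument on $L^p(0,T_0;H)$ should close the gap.
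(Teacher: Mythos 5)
Your overall architecture matches the paper's: uniform Lipschitz bounds from Lemma~\ref{3l2}, a Cauchy argument for $\{U_n\}$ driven by the accretivity of $A$, and identification of the limit via demiclosedness (Proposition~\ref{accrativeproperty}) together with convergence of the discrete Caputo sums and of $f_n$. However, the step you yourself flag as the ``principal obstacle'' --- comparing the multi-term discrete convolutions for two different mesh sizes $h_n\neq h_p$ --- is precisely the heart of the paper's proof, and your proposal does not resolve it. Pairing the $j$-th discrete equations for indices $n$ and $p$ is not even well posed, since the iterates live on different grids; the discrete Gronwall inequality used in Lemma~\ref{3l2} does not close the argument here.

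The paper's resolution is to leave the discrete level entirely: it introduces the step function $v_n(t)$ equal to the discrete Caputo sum on each subinterval, rewrites the scheme as a single continuous-in-time identity
$\frac{d}{dt}U_n(t)+\sum_q a_q v_n(t)+AX_n(t)=f_n(t)$ valid for all $t\in[0,T_0]$, and proves the key estimate $\|v_n(t)-{}^C D_t^{\alpha_q}U_n(t)\|\to 0$ by comparing the convolution weights $b_{j-i}$ with the exact kernel of the Caputo derivative of the polygonal interpolant $U_n$. Subtracting the identities for $n$ and $p$ and pairing with $X_n(t)-X_p(t)$ then reduces the mismatched-kernel problem to estimating
$\int_0^t (t-s)^{\alpha_q-1}\|U_n(s)-U_p(s)\|\,\|U_n(t)-U_p(t)\|\,ds$,
which is handled by a Cauchy inequality with a small parameter $\epsilon$ (chosen so that the $\sup$-norm term can be absorbed into the left-hand side) followed by the \emph{integral} version of Gronwall's inequality --- not the discrete one. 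Without this device, or an equivalent uniform control of the mismatched kernels, your Cauchy estimate does not go through, so the proposal has a genuine gap at its most critical step even though the surrounding scaffolding (a priori bounds, demiclosedness, treatment of the delay term and of the initial condition) is correct and coincides with the paper's.
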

\begin{proof}
	Let $n, p\in\mathbb{N}$ and $h=\frac{T_0}{n}$. Denote $ v_n$ by the following equation:
	\begin{eqnarray}\label{3.12}
		v_n(t)=	\left \{ \begin{array}{ll}
			0, \quad &t=0,
			\\\dfrac{h_n^{1-\alpha_q}}{\Gamma(2-\alpha_q)}\sum\limits_{i=1}^{j}b_{j-i}\delta\vartheta_i^n, \quad &t\in(t_{j-1}^n,t_j^n],\quad j=1,2,\dots,m.	 	
		\end{array}\right.
	\end{eqnarray}
	From the definition of ${}^C{D}^{\alpha_q}_t$, we have for $t\in (t_{j-1},t_j]$
	\begin{eqnarray}\label{ch2partialcapt}
		{}^C{D}^{\alpha_q}_tU_n(t)&=& \dfrac{1}{\Gamma(1-\alpha_q)}\displaystyle\int_{0}^{t}
		\dfrac{\frac{d}{ds}U_n(s)}{(t-s)^{\alpha_q}}ds\nonumber\\
		&=&\dfrac{1}{\Gamma(1-\alpha_q)}\left(\sum\limits_{i=1}^{j}\displaystyle\int_{t_{i-1}}^{t_{i}}\dfrac{\delta \vartheta_i}{(t-s)^{\alpha_q}}ds+\displaystyle\int_{t_{j}}^{t}\dfrac{\delta \vartheta_j}{(t-s)^{\alpha_q}}ds\right)\nonumber\\
		&=& \dfrac{1}{\Gamma(2-\alpha_q)}\sum\limits_{i=1}^{j}\left[(t-t_{i-1})^{1-\alpha_q}-(t-t_i)^{1-\alpha_q}\right]\delta \vartheta_i\nonumber\\
		&&\quad + \delta \vartheta_j\frac{(t-t_{j})^{1-\alpha_q}}{\Gamma(2-\alpha_q)}.
	\end{eqnarray}
	Also, from equation (\ref{3.12}), we have
	\begin{eqnarray}\label{ch2parttildcap}
		v_n(t)&=& \dfrac{h^{1-\alpha_q}}{\Gamma(2-\alpha_q)}\sum\limits_{i=1}^{j}b_{j-i}\delta \vartheta_i\nonumber\\
		&=& \dfrac{1}{\Gamma(2-\alpha_q)}\sum\limits_{i=1}^{j}\left[((j-i+1)h)^{1-\alpha_q}-((j-i)h)^{1-\alpha_q}\right]\delta \vartheta_i\nonumber\\
		&=&\dfrac{1}{\Gamma(2-\alpha_q)}\sum\limits_{i=1}^{j}\left[(t_j-t_{i-1})^{1-\alpha_q}-(t_j-t_i)^{1-\alpha_q}\right]\delta \vartheta_i
	\end{eqnarray}
	From equations (\ref{ch2partialcapt}) and (\ref{ch2parttildcap}), we have
	\begin{eqnarray}\label{caputoconvergence}
		\left\|v_n(t)-{}^CD^{\alpha_q}_tU_n(t)\right\|&\leq& \dfrac{1}{\Gamma(2-\alpha_q)}\sum\limits_{i=1}^{j}\left\|\delta \vartheta_i\right\|\Big[ \big[(t-t_{i-1})^{1-\alpha_q}-(t-t_i)^{1-\alpha_q} \big]\nonumber\\
		&&- \big[(t_j-t_{i-1})^{1-\alpha_q}-(t_j-t_i)^{1-\alpha_q}\big]\Big]+\dfrac{1}{\Gamma(2-\alpha_q)}\left\|\delta \vartheta_j\right\|(t-t_{j})^{1-\alpha_q}\nonumber\\
		&\leq& \dfrac{C}{\Gamma(2-\alpha_q)}\Big[t_j^{1-\alpha_q}-2t^{1-\alpha_q}+ 2(t-t_{j})^{1-\alpha_q}\Big]\to 0, \quad n\to \infty,
	\end{eqnarray}
	for $j=1,2,\dots,m$.
	
	By the definitions of ${U}_n, X_n$ and $v_n$, it is clear that (\ref{3.1}) is equivalent to the following problem 
	\begin{eqnarray}\label{3.13}
		\frac{d}{dt}U_n(t) +\sum_{q=1}^{k}a_qv_n(t)+{A}X_n(t)=f_n(t) \text{ for all } t\in [0,T_0],
	\end{eqnarray}
	where  \begin{eqnarray*}
		f_n(t)= f(t_j, X_n(t-\nu)), \quad t\in (t_{j-1}, t_j], \quad j=1,2,\dots,m.
	\end{eqnarray*}
	Analogously, for $\bar{h}=\frac{T_0}{p}$, we have
	\begin{eqnarray}\label{3.14}
		\frac{d}{dt}U_p(t) +\sum_{q=1}^{k}a_qv_p(t)+{A}X_p(t)=f_p(t) \text{ for all } t\in [0,T_0].
	\end{eqnarray}
	Subtracting the equalities (\ref{3.13}) and (\ref{3.14}), and then multiply the result by $X_n(t)-X_p(t)$, we have  
	\begin{eqnarray*}
		&&\left\langle\frac{d}{dt}U_n(t)- \frac{d}{dt}U_p(t), X_n(t)-X_p(t) \right\rangle+ 	\left\langle AX_n(t)-AX_p(t), X_n(t)-X_p(t)\right\rangle\\
		&&\quad=-\left\langle\sum_{q=1}^{k}a_qv_n(t)-\sum_{q=1}^{k}a_q{}^C{D}_t^{\alpha_q}U_n(t),X_n(t)-X_p(t)\right\rangle\\
		&&\quad\quad-\left\langle \sum_{q=1}^{k}a_q{}^C{D}_t^{\alpha_q}U_n(t)-\sum_{q=1}^{k}a_q{}^C{D}_t^{\alpha_q}U_p(t),  X_n(t)-X_p(t)
		\right\rangle\\\quad
		&&\quad\quad+\left\langle\sum_{q=1}^{k}a_qv_p(t)-\sum_{q=1}^{k}a_q{}^C{D}_t^{\alpha_q}U_p(t),X_n(t)-X_p(t)\right\rangle+\left \langle f_n(t)-f_p(t),X_n(t)-X_p(t)\right\rangle.
	\end{eqnarray*}
	Thus, we deduce the following estimate
	\begin{eqnarray}\label{estimate}
		&&\left\langle\frac{d}{dt}U_n(t)- \frac{d}{dt}U_p(t), U_n(t)-U_p(t) \right\rangle+ \left\langle\frac{d}{dt}U_n(t)- \frac{d}{dt}U_p(t), X_n(t)-U_n(t) \right\rangle\nonumber\\
		&&+\left\langle\frac{d}{dt}U_n(t)- \frac{d}{dt}U_p(t), U_p(t)-X_p(t) \right\rangle+	\left\langle AX_n(t)-AX_p(t), X_n(t)-X_p(t)\right\rangle\nonumber\\
		&&\quad=-\left\langle\sum_{q=1}^{k}a_qv_n(t)-\sum_{q=1}^{k}a_q{}^C{D}_t^{\alpha_q}U_n(t),X_n(t)-X_p(t)\right\rangle\nonumber\\
		&&\quad\quad-\left\langle \sum_{q=1}^{k}a_q{}^C{D}_t^{\alpha_q}U_n(t)-\sum_{q=1}^{k}a_q{}^C{D}_t^{\alpha_q}U_p(t),  X_n(t)-X_p(t)
		\right\rangle\nonumber\\
		&&\quad\quad+\left\langle\sum_{q=1}^{k}a_qv_p(t)-\sum_{q=1}^{k}a_q{}^C{D}_t^{\alpha_q}U_p(t),X_n(t)-X_p(t)\right\rangle\nonumber\\
		&&\quad\quad+\left \langle f_n(t)-f_p(t),X_n(t)-X_p(t)\right\rangle.
	\end{eqnarray}
	According to Lemma \ref{3l2}, we easily calculate that
	\begin{eqnarray}\label{boundofunminusxn}
		\left\|U_n(t)-X_n(t)\right\|= |t-t_j|\cdot\left\|\delta\vartheta_j\right\|\leq \frac{C}{n},
	\end{eqnarray}
	for all $t\in (t_{j-1}, t_j]$. From the Lemmas \ref{3l1}, \ref{3l2}, and inequality (\ref{boundofunminusxn}), we have the following inequality 
	\begin{eqnarray}
		\left \{ \begin{array}{lll} \left\|\dfrac{d}{dt}U_n(t)\right\|= \left\|\delta\vartheta_j\right\|\leq C,\vspace{0.2cm}	
			&\\\left\|X_n(t)-X_p(t)\right\|\leq \left\|X_n(t)-\chi(0)\right\|+\left\|X_p(t)-\chi(0)\right\|\leq C,
			\vspace{0.2cm}	
			&\\\left\|{}^C{D}_t^{\alpha_q}U_n(t)-{}^C{D}_t^{\alpha_q}X_n(t)\right\|\leq C,
			\vspace{0.2cm}	
			&\\\left\|{}^C{D}_t^{\alpha_q}U_n(t)-{}^C{D}_t^{\alpha_q}U_p(t)\right\|\leq C,
			\vspace{0.2cm}	
			&\\\left\|f_n(t)-f_p(t)\right\|\leq L_f\left[|t_j-t_{\bar{j}}+ \left\|X_n(t-\nu)-X_p(t-\nu)\right\|\right]\leq \epsilon_{np}(t),
		\end{array}\right.
	\end{eqnarray}
	where $\epsilon_{np}(t)=L_f\left[|t_j-t_{\bar{j}}+ \left\|X_n(t-\nu)-U_n(t)\right\| + \left\|X_p(t-\nu)-U_n(t)\right\|\right]$
	for all $t\in (t_{j-1}, t_j]$  and  $t\in (t_{\bar{j}-1}, t_{\bar{j}}], 1\leq j\leq n, 1\leq \bar{j}\leq p$. Therefore, $\epsilon_{np}(t)\to 0$ as $n,p \to \infty$ uniformly on $[0, T_0]$. 
	
	By using $m$-accretivity of the operator $A$, Lemmas \ref{3l1}, \ref{3l2}, the inequality (\ref{caputoconvergence}), from (\ref{estimate}), we get
	\begin{eqnarray}\label{estimate1}
		&&\frac{1}{2}\frac{d}{dt}\left\|U_n(t)- U_p(t)\right\|^2\nonumber\\
		&&\leq C\left[\left\|U_n(t)-X_n(t)\right\|+\left\|U_p(t)-X_p(t)\right\|+  \left\|f_n(t)-f_p(t)\right\|+ \sum_{q=1}^{k}\left[\frac{1}{n^{1-\alpha_q}}+\frac{1}{p^{1-\alpha_q}}\right]\right]\nonumber\\
		&&\quad+\sum_{q=1}^{k}a_q\left\|{}^C{D}_t^{\alpha_q}U_n(t)-{}^C{D}_t^{\alpha_q}U_p(t)\right\|\left\|X_n(t)-X_p(t)\right\|\nonumber\\
		&&\leq C\left[\frac{1}{n}+\frac{1}{p}+ \epsilon_{np}(t)+ \sum_{q=1}^{k}\left[\frac{1}{n^{1-\alpha_q}}+\frac{1}{p^{1-\alpha_q}}\right]\right]\nonumber\\
		&&\quad+\sum_{q=1}^{k}a_q\left\|{}^C{D}_t^{\alpha_q}U_n(t)-{}^C{D}_t^{\alpha_q}U_p(t)\right\|\left\|X_n(t)-U_n(t)\right\|	\nonumber\\
		&&\quad+\sum_{q=1}^{k}a_q\left\|{}^C{D}_t^{\alpha_q}U_n(t)-{}^C{D}_t^{\alpha_q}U_p(t)\right\|\left\|X_p(t)-U_p(t)\right\|	\nonumber\\
		&&\quad+\sum_{q=1}^{k}a_q\left\|{}^C{D}_t^{\alpha_q}U_n(t)-{}^C{D}_t^{\alpha_q}U_p(t)\right\|\left\|U_n(t)-U_p(t)\right\|\nonumber\\
		&&\leq C\left[\frac{1}{n}+\frac{1}{p}+ \epsilon_{np}(t)+ \sum_{q=1}^{k}\left[\frac{1}{n^{1-\alpha_q}}+\frac{1}{p^{1-\alpha_q}}\right]\right]\nonumber\\
		&&\quad+\sum_{q=1}^{k}\dfrac{a_q}{\Gamma(1-\alpha_q)}\int_{0}^{t}(t-s)^{\alpha_q}\frac{d}{ds}\left\|U_n(s)-U_p(s)\right\|\left\|U_n(t)-U_p(t)\right\|ds\nonumber\\
		&&\leq 	C\left[\frac{1}{n}+\frac{1}{p}+ \epsilon_{np}(t)+ \sum_{q=1}^{k}\left[\frac{1}{n^{1-\alpha_q}}+\frac{1}{p^{1-\alpha_q}}\right]\right]\nonumber\\
		&&\quad+\sum_{q=1}^{k}\dfrac{a_q\alpha_q}{\Gamma(1-\alpha_q)}\int_{0}^{t}(t-s)^{\alpha_q-1}\left\|U_n(s)-U_p(s)\right\|\left\|U_n(t)-U_p(t)\right\|ds.
	\end{eqnarray}
	We consider the last term in the inequality (\ref{estimate1}). It follows from the Cauchy inequality with $\epsilon>0$, see \cite{E1990}, that
	\begin{eqnarray}\label{integralestimate}
		&&\sum_{q=1}^{k}\dfrac{a_q\alpha_q}{\Gamma(1-\alpha_q)}\int_{0}^{t}(t-s)^{\alpha_q-1}\left\|U_n(s)-U_p(s)\right\|\left\|U_n(t)-U_p(t)\right\|ds\nonumber\\
		&&\leq\epsilon\sum_{q=1}^{k}\dfrac{a_q\alpha_q}{\Gamma(1-\alpha_q)}\int_{0}^{t}(t-s)^{\alpha_q-1}\left\|U_n(s)-U_p(s)\right\|^2ds\nonumber\\
		&&\quad + \sum_{q=1}^{k}\dfrac{a_qT_0^{\alpha_q}}{4\epsilon\Gamma(1-\alpha_q)}\left\|U_n(t)-U_p(t)\right\|^2\nonumber\\
		&&\leq\sum_{q=1}^{k}\dfrac{a_qT_0^{\alpha_q}}{4\epsilon\Gamma(1-\alpha_q)}\left\|U_n(t)-U_p(t)\right\|^2 + \epsilon\sum_{q=1}^{k}\dfrac{a_qT_0^{\alpha_q}}{\Gamma(1-\alpha_q)}\sup_{t\in [0, T_0]}\left\|U_n(t)-U_p(t)\right\|^2.
	\end{eqnarray}  
	Using the inequality (\ref{integralestimate}), we can write the equation (\ref{estimate1}) as
	\begin{eqnarray}\label{estimate2}
		\frac{1}{2}\frac{d}{dt}\left\|U_n(t)-U_p(t)\right\|^2	&\leq &	C\left[\frac{1}{n}+\frac{1}{p}+ \epsilon_{np}(t)+ \sum_{q=1}^{k}\left[\frac{1}{n^{1-\alpha_q}}+\frac{1}{p^{1-\alpha_q}}\right]\right]\nonumber\\
		&&\quad\quad+\sum_{q=1}^{k}\dfrac{a_qT_0^{\alpha_q}}{4\epsilon\Gamma(1-\alpha_q)}\left\|U_n(t)-U_p(t)\right\|^2\nonumber\\
		&&\quad\quad+ \epsilon\sum_{q=1}^{k}\dfrac{a_qT_0^{\alpha_q}}{\Gamma(1-\alpha_q)}\sup_{t\in [0, T_0]}\left\|U_n(t)-U_p(t)\right\|^2.
	\end{eqnarray}
	Integrating the inequality (\ref{estimate2}) over $[0, T_0]$, we obtain that
	\begin{eqnarray}\label{estimate3}
		\frac{1}{2}\left\|U_n(t)-U_p(t)\right\|^2	&\leq &	C\left[\frac{1}{n}+\frac{1}{p}+ \epsilon_{np}(t)+ \sum_{q=1}^{k}\left[\frac{1}{n^{1-\alpha_q}}+\frac{1}{p^{1-\alpha_q}}\right]\right]\nonumber\\
		&&\quad\quad+ \epsilon\sum_{q=1}^{k}\dfrac{a_qT_0^{\alpha_q+1}}{\Gamma(1-\alpha_q)}\sup_{t\in [0, T_0]}\left\|U_n(t)-U_p(t)\right\|^2\nonumber\\
		&&\quad\quad+\sum_{q=1}^{k}\dfrac{a_qT_0^{\alpha_q}}{4\epsilon\Gamma(1-\alpha_q)}\int_{0}^{t}\left\|U_n(s)-U_p(s)\right\|^2ds.
	\end{eqnarray}
Choose $\epsilon=\frac{1}{4}\left[\sum\limits_{q=1}^{k}\dfrac{a_qT_0^{\alpha_q+1}}{\Gamma(1-\alpha_q)}\right]^{-1}$, so that $\frac{1}{2}-\epsilon\left(\sum_{q=1}^{k}\dfrac{a_qT_0^{\alpha_q+1}}{\Gamma(1-\alpha_q)}\right)=\frac{1}{4}$. This leads to 
\begin{eqnarray}\label{estimate4}
		\frac{1}{2}\left\|U_n(t)-U_p(t)\right\|^2	&\leq & 	\frac{1}{4}\sup_{t\in [0, T_0]}\left\|U_n(t)-U_p(t)\right\|^2\nonumber\\
		&\leq&	C\left[\frac{1}{n}+\frac{1}{p}+ \epsilon_{np}(t)+ \sum_{q=1}^{k}\left[\frac{1}{n^{1-\alpha_q}}+\frac{1}{p^{1-\alpha_q}}\right]\right]\nonumber\\
		&&\quad\quad+\sum_{q=1}^{k}\dfrac{a_qT_0^{\alpha_q}}{4\epsilon\Gamma(1-\alpha_q)}\int_{0}^{t}\left\|U_n(s)-U_p(s)\right\|^2ds.
	\end{eqnarray}
	Thus, applying the integral version of Gronwall inequality, see \cite[Lemma 2.31. p.49]{MWM2006}, we conclude that
	\begin{eqnarray*}
		\left\|U_n(t)-U_p(t)\right\|^2	&\leq &C\left[\frac{1}{n}+\frac{1}{p}+ \epsilon_{np}(t)+ \sum_{q=1}^{k}\left[\frac{1}{n^{1-\alpha_q}}+\frac{1}{p^{1-\alpha_q}}\right]\right],
	\end{eqnarray*}
	for all $t\in [0, T_0]$, where $C$ is independent of $j, \bar{j}$ and $t$. This shows that $U_n$ is a Cauchy sequence in $[0, T_0]$, and therefore, there exists $\vartheta\in C([-\nu, T_0];H)$, such that $U_n\to \vartheta$ as $n\to \infty$. Further, by the fact that the function  $U_n$ is uniformly Lipschitz continuous for all $n$, we conclude that $\vartheta$ is Lipschitz continuous as well. 
	
	Next, we will show that  $\vartheta$ is strong solution of (\ref{1.1}). From the convergence $U_n\to \vartheta$ in $C([-\nu, T_0]; H)$ as $n\to \infty$, we know that $U_n(t)\to \vartheta(t)$ in $H$ for all $t\in [0,T_0]$ as $n\to \infty$. However, the inequality (\ref{boundofunminusxn}) ensures that $X_n(t)\to \vartheta(t)$ in $H$ for all $t\in [0,T_0]$, as $n\to \infty$. According to the $m$-accretivity of the operator $A$, the
	uniform convexity of $H$, from Proposition \ref{accrativeproperty}, we know that $A$ is demiclosed, thus, $AX_n(t)\rightharpoonup A\vartheta(t)$ in $H$ for all $t\in [0,T_0]$, as $n\to \infty$.
	
	In addition, it is clear that ${}^C{D}_t^{\alpha_q}U_n(t)\to {}^C{D}_t^{\alpha_q}\vartheta(t)$ in $H$ for all $t\in [0,T_0]$, as $n\to \infty$. Then, directly from (\ref{caputoconvergence}), we also get $\sum\limits_{q=1}^{k}a_qv_n(t)\to\sum\limits_{q=1}^{k}a_q{}^C{D}_t^{\alpha_q}\vartheta(t)$ in $H$ for all $t\in [0,T_0]$, as $n\to \infty$. Moreover, we easily obtain $f_n(t)\to f(t,\vartheta(t-\nu))$ in $C([-\nu, T_0]; H)$ as $n\to \infty$.
	
	Let $z^*\in H$. We multiply (\ref{3.13}) by $z^*$, and integrate the result on $[0, T_0]$, we have		
	\begin{eqnarray*}
		\left\langle U_n(t)- U_0,z^*\right\rangle +  \displaystyle\int_{0}^{t}\sum_{q=1}^{k}a_q\left\langle v_n(s),z^*\right\rangle ds+\displaystyle\int_{0}^{t}\left\langle {A}X_n(s), z^*\right\rangle ds= \displaystyle\int_{0}^{t}\left\langle f_n(s),z^*\right\rangle ds.
	\end{eqnarray*}
	Letting $n\to \infty$ and applying Lebesgue dominated convergence theorem, we infer that
	\begin{eqnarray*}
		\left\langle \vartheta(t)- U_0,z^*\right\rangle +  \displaystyle\int_{0}^{t}\sum_{q=1}^{k}a_q\left\langle {}^C{D}_t^{\alpha_q}  \vartheta(s),z^*\right\rangle ds+\displaystyle\int_{0}^{t}\left\langle {A}\vartheta(s), z^*\right\rangle ds= \displaystyle\int_{0}^{t}\left\langle f(s, \vartheta(s-\nu)),z^*\right\rangle ds,
	\end{eqnarray*}
	for all $z^*\in H$. Since $z^*$ is arbitrary, we can easily get 
	\begin{eqnarray*}
		\vartheta(t)- U_0+ \sum_{q=1}^{k}a_q \displaystyle\int_{0}^{t}{}^C{D}_t^{\alpha_q} \vartheta(s)ds+\displaystyle\int_{0}^{t}{A}\vartheta(s)ds= \displaystyle\int_{0}^{t}f(s, \vartheta(s-\nu))ds.
	\end{eqnarray*}
	Hence, we obtain that
	\begin{eqnarray*}
		\dfrac{d}{dt}\vartheta(t)+\sum\limits_{q=1}^{k}a_q{}^C{D}_t^{\alpha_q}\vartheta(t)+{A}\vartheta(t)=f(t,\vartheta(t-\nu)),
	\end{eqnarray*} 
	for a.e. $t\in [0,T_0]$. From the convergence $U_n(0)\to \vartheta(0)$ in ${H}$ as $n\to \infty$, we have $\vartheta(0)= \chi(0)$. This means that $\vartheta$ is a strong solution of (\ref{1.1}) in $[0,T_0]$.
\end{proof}

\section{\textbf{Applications}}
\begin{example}
	Consider the following differential equation:
	\begin{eqnarray}  \label{5.1}
		\left \{ \begin{array}{lll} \dfrac{\partial \vartheta(t,x)}{\partial t}+\sum\limits_{q=1}^{k}a_q{}^C{D}_t^{\alpha_q}\vartheta(t,x)-\dfrac{\partial^2\vartheta(t,x)}{\partial x^2}=f(t,\vartheta(t-2\pi,x)), 
			&\\\hspace{6.3cm}t\in [0,2\pi],~x\in[0,1],
			&\\ \vartheta(t,x)=\phi(t,x), \quad t \in [-2\pi,0],~x\in[0,1],
			&\\\vartheta(t,0)=\vartheta(t,1)=0, \quad t \in [0, 2\pi],
		\end{array}\right.
	\end{eqnarray}
	where $a_q\geq 0$,~${}^C{D}_t^{\alpha_q}$ represents Caputo derivative of order $0<\alpha_q<1$ for $q=1,2,\dots,k$ and $\vartheta:[-2\pi,2\pi]\times[0,1]\to \mathbb{R}$ is an unknown function. The function $f$ is a Lipschitz continuous function in $t\times \vartheta$ and $\phi(t,x): [-2\pi,0]\times [0,\pi]\to \mathbb{R}$ is a given function.
	
	\noindent Let us take ${H}=L^2[0,1]$ and define an operator ${A}$ by ${A}\vartheta=-\vartheta''$ with $$D({A})=\{\vartheta \in {H} : \vartheta, \vartheta'~ \mbox{are absolutely continuous},~ \vartheta'' \in {H}, \mbox{ and }~\vartheta(0)=\vartheta(1)=0\}.$$ 
	Clearly, $-{A}$ is the generator of a compact analytic semigroup of contractions. Therefore, by \cite[Theorem 2.3.3]{A1983},  $(I+{A})^{-1}$ is compact. Next, we use $\vartheta: [-2\pi,2\pi]\to {H},~\phi:[-2\pi,0]\to {H}$ and $f: [0,2\pi]\times C([-2\pi,0];{H})\to {H}$ to represent $\vartheta(t,x)$,~$\phi(t,x)$ and $f(t,\vartheta(t-2\pi,x))$, respectively. Then $(\ref{5.1})$ can be written as in abstract form of (\ref{1.1}). Further, it is easy to check that (A1)-(A4) are satisfied. Therefore, Theorem~\ref{th1} guarantees  that there exists a strong solution of (\ref{5.1}).
\end{example}
\section{\textbf{Conclusions}}
The aim of the present work is to establish the existence result of a strong solution of the multi-term fractional differential equations with delay. To arrive at our conclusion, we have used the approach of Rothe's method. Finally, we have given an example to illustrate the finding. We will focus on the existence and uniqueness of solutions for a quasi-linear fractional differential equation.

\end{document}